\theoremstyle{plain} \numberwithin{equation}{section}
\newtheorem{Theorem}{Theorem}
\newtheorem{Lemma}[Theorem]{Lemma}
\newtheorem{Proposition}[Theorem]{Proposition}
\newtheorem{Corollary}[Theorem]{Corollary}
\theoremstyle{remark}
\title[Riesz projections]
{Deviations of Riesz projections of Hill operators with singular
potentials}
\author{Plamen Djakov}
\author{Boris Mityagin}
\begin{document}

\address{Sabanci University, Orhanli,
34956 Tuzla, Istanbul, Turkey}

 \email{djakov@sabanciuniv.edu}

\address{Department of Mathematics,
The Ohio State University,
 231 West 18th Ave,
Columbus, OH 43210, USA} \email{mityagin.1@osu.edu}

\begin{abstract}
It is shown that the deviations $P_n -P_n^0$ of Riesz projections
$$
 P_n = \frac{1}{2\pi i} \int_{C_n}
(z-L)^{-1} dz, \quad C_n=\{|z-n^2|= n\},
$$
of Hill operators $L y = - y^{\prime \prime} + v(x) y, \; x \in
[0,\pi],$ with zero and $H^{-1}$ periodic potentials go to zero as
$n \to \infty $ even if we consider $P_n -P_n^0$ as operators from
$L^1$ to $L^\infty. $  This implies that all $L^p$-norms are
uniformly equivalent on the Riesz subspaces $Ran \,P_n. $
\end{abstract}

\maketitle

\section{Introduction}

 We consider the Hill operator
\begin{equation}
\label{01} Ly = - y^{\prime \prime} + v(x) y, \qquad x \in
I=[0,\pi],
\end{equation}
with a singular periodic potential $v, \; v(x+\pi) = v(x), \;v \in
H^{-1}_{loc} (\mathbb{R}),$ i.e., $$ v(x) = v_0 + Q^\prime (x),$$
where $$Q \in L^2_{loc}(\mathbb{R}), \quad Q(x+\pi) = Q(x), \quad
w(0) = \int_0^\pi Q(x) dx =0, $$ so $$ Q = \sum_{m \in
2\mathbb{Z}\setminus \{0\}} w (m) e^{ imx},\quad \|v|H^{-1} \|^2 =
|v_0|^2 + \sum_{m \in 2\mathbb{Z}\setminus \{0\}} |w(m)|^2/m^2
<\infty. $$

A. Savchuk and A. Shkalikov \cite{SS03} gave thorough spectral
analysis of such operators.  In particular, they consider a broad
class of boundary conditions (bc) -- see (1.6), Theorem 1.5 there --
in terms of a function $y$ and its quasi--derivative
$$
u = y^\prime - Q y.
$$
Now the natural form of periodic or antiperiodic $(Per^\pm)$ bc is
the following one:
\begin{equation}
\label{02}
 Per^\pm: \quad y(\pi) = \pm y(0), \quad u(\pi) = \pm u(0)
\end{equation}
If the potential $v$ happens to be an $L^2$-function these $bc$ are
identical to the classical ones (see discussion in \cite{DM16},
Section 6.2).

 The Dirichlet bc is more simple:
$$ Dir: \quad  y(0) =0,  \quad y(\pi) =0; $$
it does not require quasi--derivatives, so it is defined in the same
way as for $L^2$--potentials $v$.

In our analysis of instability zones of Hill and Dirac operators
(see \cite{DM15} and the comments there) we follow an approach
(\cite{KM1,KM2,DM3,DM5,DM7,DM6}) based on Fourier Method. But in the
case of singular potentials it may happen that the functions
$$
u_k = e^{ikx} \quad \mbox{or} \quad \sin kx, \;\;k \in \mathbb{Z},
$$
have their $L$--images outside $L^2.$ Moreover, for some singular
potentials $v$ we have $Lf \not \in L^2$ for {\em any smooth} (say
$C^2 -$) nonzero function $f.$ (For example, choose
$$
v(x) = \sum_{r} a(r) \delta_* (x-r),\quad  r  \; \mbox{rational},
r\in I,
$$
with $a(r) >0, \; \sum_{r} a(r) = 1 $ and $\delta_* (x) = \sum_{k
\in \mathbb{Z}} \delta (x-k \pi).$)

This implies, for any reasonable bc, that the eigenfunctions
$\{u_k\}$ of the free operator $L^0_{bc}$ are not necessarily in the
domain of $L_{bc}.$  Yet, in \cite{DM17,DM16} we gave a
justification of the Fourier method for operators $L_{bc}$ with
$H^{-1}$--potentials and $bc = Per^\pm $ or $Dir.$ Our results are
announced in \cite{DM17}, and in \cite{DM16} all technical details
of justification of the Fourier method are provided.

Now, in the case of singular potentials, we want to compare the
Riesz projections $P_n$ of the operator $L_{bc}, $ defined for large
enough $n$ by the formula
\begin{equation}
\label{03}
 P_n = \frac{1}{2\pi i} \int_{C_n}
(z-L_{bc})^{-1} dz, \quad C_n=\{|z-n^2|= n\},
\end{equation}
with the corresponding Riesz projections $P_n^0$ of the free
operator $L_{bc}^0$  (although $E_n^0 = Ran (P_n^0)$ maybe have no
common nonzero vectors with the domain of $L_{bc}).$

The main result is Theorem \ref{thm1}, which claims that
\begin{equation}
\label{05} \tilde{\tau}_n = \|P_n -P_n^0 \|_{L^1 \to L^\infty} \to
0.
\end{equation}

This implies  a sort of {quantum chaos}, namely all $L^p$--norms on
the Riesz subspaces $E_n = Ran P_n, $ for bc = $Per^\pm $ or $Dir, $
are uniformly equivalent (see Theorem 6 in Section 5).

In our analysis (see \cite{DM15}) of the relationship between
smoothness of a potential $v $ and the rate of decay of spectral
gaps and spectral triangles  a statement similar to (\ref{05})
\begin{equation}
\label{06} \tau_n = \|P_n -P_n^0 \|_{L^2 \to L^\infty} \to 0.
\end{equation}
was crucial when we used the deviations of Dirichlet eigenvalues
from periodic or anti--periodic eigenvalues to estimate the Fourier
coefficients of the potentials $v.$ But if $v \in L^2 $ it was
''easy'' (see \cite{DM5}, Section 3, Prop.4, or \cite{DM15},
Prop.11). Moreover, those are strong estimates: for $n \geq
N(\|v\|_{L^2})$
\begin{equation}
\label{07} \tau_n \leq \frac{C}{n} \|v\|_{L^2},
\end{equation}
where $C$ is an absolute constant. Therefore, in (\ref{07}) only the
$L^2$--norm is important, so $\tau_n \leq CR/n$ holds {\em for every
$v$ in an $L^2$--ball of radius } $R.$

Just for comparison let us mention the same type of question in the
case of 1D periodic Dirac operators
$$
MF = i \begin{pmatrix} 1 &0\\0  & -1
\end{pmatrix}
\frac{dF}{dx} +  \begin{pmatrix} 0 &p\\ q  & 0
\end{pmatrix}   F,  \quad    0 \leq x\leq \pi,
$$
where $p$ and $q$ are $L^2$--functions and $ F= \begin{pmatrix}
f_1\\f_2
\end{pmatrix}.$
The boundary conditions under consideration are $Per^\pm $ and
$Dir,$ where
$$Per^\pm : \;\; F(\pi) = \pm F(0), \qquad Dir:  \; \; f_1 (0)=f_2
(0), \;\; f_1 (\pi) = f_2 (\pi). $$ Then (see \cite{M04} or
\cite{DM15}, Section 1.1)
$$
E_n^0 = \left \{ \begin{pmatrix} a e^{-inx} \\be^{inx}
\end{pmatrix} :\;\; a,b \in \mathbb{C} \right \},\quad n \in
\mathbb{Z},
$$
where $n$ is even if $bc = Per^+$ and $n$ is odd if $bc = Per^-,$
and $$ E_n^0 = \{c \sin nx, \; c \in \mathbb{C} \}, \quad n \in
\mathbb{N} $$ if $ bc =Dir.$ Then for $$ Q_n = \frac{1}{2\pi i}
\int_{C_n} (\lambda - L)^{-1} d\lambda, \quad C_n = \{\lambda: \;
|\lambda - n| = 1/4 \},$$ we have
$$
\rho_n (V) := \| Q_n -Q_n^0\|_{L^2 \to L^\infty } \to 0;
$$
moreover, for any compact set $K \subset L^2$ and $ V\in K,$ i.e.,
$p,q \in K $ one can construct a sequence $\varepsilon_n (K) \to 0$
such that $\rho_n (V) \leq \varepsilon_n (K), \; V \in K. $ This has
been proven in \cite{M04}, Prop.8.1 and Cor.8.6;  see Prop. 19 in
\cite{DM15} as well.

Of course, the norms $\tau_n$ in (\ref{06}) are larger than the
norms of these operators in $L^2$
$$
t_n = \| P_n -P_n^0\|_{L^2 \to L^2 } \leq \tau_n
$$
and better (smaller) estimates for $t_n $ are possible. For example,
A. Savchuk and A. Shkalikov proved (\cite{SS03}, Sect.2.4) that
$\sum t^2_n < \infty. $ This implies (by Bari--Markus theorem -- see
\cite{GK}, Ch.6, Sect.5.3, Theorem 5.2) that the spectral
decompositions
$$
f = f_N + \sum_{n>N} P_n f
$$
converge unconditionally. For Dirac operators the Bari--Markus
condition is
$$
\sum_{n\in \mathbb{Z},|n|>N} \|Q_n - Q_n^0\|^2 < \infty. $$ This
fact (and completeness of the system of Riesz subspaces $Ran \,Q_n$)
imply unconditional convergence of the spectral decompositions. This
has been proved in \cite{M04} under the assumption that the
potential $V$ is in the Sobolev space $H^\alpha, \; \alpha>1/2 $
(see \cite{M04}, Thm 8.8 for more precise statement). See further
comments in Section 5 below as well.

The proof of Theorem \ref{thm1}, or the estimates of norms
(\ref{05}), are based on the perturbation theory, which gives the
representation
\begin{equation}
\label{011}
 P_n -P_n^0=
\frac{1}{2\pi i} \int_{C_n} \left ( R(\lambda) -R^0 (\lambda) \right
) d\lambda,
\end{equation}
where $R(\lambda) = (\lambda - L_{bc})^{-1} $ and $R^0 (\lambda )$
are the resolvents of $L_{bc}$ and of the free operator $L^0_{bc},$
respectively. Often -- and certainly in the above mentioned examples
where $v \in L^2 $   --  one can get reasonable estimates for the
norms $ \| R(\lambda ) - R^0 (\lambda )\|$ on the contour $C_n,$ and
then by integration for $\|P_n -P_n^0 \|.$ But now, with  $v \in
H^{-1}, $ we succeed to get good estimates for the norms $\|P_n
-P_n^0 \|$ {\em after} having integrated term by term the series
representation
\begin{equation}
\label{012} R-R^0 = R^0VR^0  +R^0VR^0 V R^0  + \cdots .
 \end{equation}
This integration kills or makes more manageable many terms, maybe in
their matrix representation. Only then we go to the norm estimates.
Technical details of this procedure (Section 3) is the core of the
proof of Theorem \ref{thm1}, and of this paper. \vspace{3mm}

{\em Acknowledgements.} This paper has been completed in Fall
Semester 2007 when Boris Mityagin stayed at Weizmann Institute of
Science, Rehovot, Israel, as Weston Visiting Professor; he thanks
Weizmann Institute for hospitality and stimulating environment.

\section{Main result}

 By our Theorem 21 in \cite{DM16} (about spectra localization), the operator
$L_{Per\pm}$ has, for large enough $n,$ exactly two eigenvalues
(counted with their algebraic multiplicity) inside the disc of
radius $n$ about $n^2$ (periodic for even $n$ or antiperiodic for
odd $n$). The operator $L_{Dir}$ has one eigenvalue in these discs
for all large enough $n.$

Let $E_n$  be the corresponding Riesz invariant subspace, and let
$P_n$  be the corresponding Riesz projection, i.e., $$ P_n =
\frac{1}{2\pi i} \int_{C_n} (\lambda - L)^{-1} d\lambda, $$ where
$C_n = \{\lambda: \; |\lambda - n^2| =n \}.   $ We denote by $P_n^0$
the Riesz projector that corresponds to the free operator.

\begin{Proposition}
\label{prop1} In the above notations, for boundary conditions $bc
=Per^\pm $ or $Dir,$
\begin{equation}
\label{p20} \|P_n - P_n^0 \|_{L^2 \to L^\infty} \to 0 \quad
\text{as} \;\; n \to \infty.
\end{equation}
\end{Proposition}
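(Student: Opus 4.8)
The plan is to work from the series representation \eqref{012} and integrate term by term. First I would fix a large $n$ and write
\begin{equation}
\label{plan-series}
P_n - P_n^0 = \sum_{k \geq 1} \frac{1}{2\pi i} \int_{C_n} R^0(\lambda)\, (V R^0(\lambda))^k \, d\lambda =: \sum_{k \geq 1} K_n^{(k)},
\end{equation}
which is legitimate because on $C_n = \{|\lambda - n^2| = n\}$ one has an operator-norm bound $\|V R^0(\lambda)\| \leq q < 1$ for $n$ large, guaranteed by the localization results of \cite{DM16} and the standard estimates there (this is where the $H^{-1}$ structure $v = v_0 + Q'$ is absorbed into quasi-derivative-adapted norms). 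The point of integrating before estimating is that $R^0(\lambda)$ is diagonal in the exponential (or sine) basis with entries $1/(\lambda - m^2)$, so each $K_n^{(k)}$ becomes a sum over lattice paths $m_0 \to m_1 \to \cdots \to m_k$ with the $\lambda$-integral of a product of simple poles; only paths that begin and end in the index block near $n$ survive with a residue, and the others contribute lower-order terms. I would then estimate $\|K_n^{(k)}\|_{L^1 \to L^\infty}$ (this is exactly the norm controlling \eqref{p20} since $L^1 \to L^\infty$ dominates $L^2 \to L^\infty$) by the sup over $x,t$ of the kernel, i.e., by an absolutely convergent sum of the Fourier-coefficient products $|w(m_j - m_{j-1})|$ against the harmonic-sum factors coming from the resolvents.

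The key estimates would proceed in three stages. Stage one: the single term $k=1$, $K_n^{(1)} = \frac{1}{2\pi i}\int_{C_n} R^0 V R^0\, d\lambda$, whose kernel after integration is a finite-rank-plus-small expression; its $L^1\to L^\infty$ norm is controlled by $\sup_m \sum_{j} |w(m-j)|\, c_{mj}$ with $c_{mj}$ the residue weights, and this tends to $0$ as $n\to\infty$ because the relevant indices $m,j$ are pushed out to infinity where $\sum |w(\cdot)|^2/(\cdot)^2$ has a vanishing tail. Stage two: for the higher terms $k \geq 2$, I would extract a factor of the same ''tail of $\ell^2$ weighted by $1/m$'' type quantity — call it $\varepsilon_n \to 0$ — from the outermost resolvent-potential-resolvent block, and bound the remaining $(VR^0)^{k-1}$ chain by $q^{k-1}$ in operator norm on the ambient $L^2$, together with the fact that the ends of the chain are forced near index $n$. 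Stage three: sum the geometric series $\sum_{k\geq 1} \varepsilon_n q^{k-1} = \varepsilon_n/(1-q) \to 0$.

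The main obstacle I expect is stage two: honestly separating the ''small'' outer factor $\varepsilon_n$ from the operator-norm-bounded inner chain, while simultaneously tracking that the inner chain, composed with projection onto near-$n$ indices on both sides, still maps $L^1 \to L^\infty$ rather than merely $L^2 \to L^2$. The subtlety is that a bare $\ell^2$-type bound on the Fourier coefficients of $Q$ is not by itself enough to get an $L^\infty$ (i.e., pointwise kernel) estimate; one needs to exploit that one resolvent at each end of the chain, evaluated on $C_n$, provides a genuinely summable weight $\sum_m 1/|\lambda - m^2| \lesssim \log n / n$ or similar, and that the contour integration has already collapsed the ''resonant'' double pole at $m \approx n$ into a bounded residue. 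Carefully bookkeeping which indices are free to range over all of $\mathbb{Z}$ (and hence supply convergence of $\sum |w|^2/m^2$) versus which are pinned near $n$ (and hence supply the decay in $n$), uniformly in $k$, is the technical heart; it is precisely the ''integration kills or makes more manageable many terms'' step flagged in the introduction, and I would organize it as a lemma on matrix elements of $K_n^{(k)}$ before passing to norms.
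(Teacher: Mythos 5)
Your overall strategy --- integrate the Neumann series term by term over $C_n$, pass to matrix elements in the eigenbasis of $L^0$, classify the surviving terms by which summation indices are resonant, and control the $L^1\to L^\infty$ norm (which dominates the $L^2\to L^\infty$ norm of the Proposition) through absolutely summed kernels --- is exactly the paper's strategy. But two of your steps contain genuine gaps, and they sit precisely where you yourself locate ``the technical heart.'' First, the residue bookkeeping is misstated: a path $m_0\to m_1\to\cdots\to m_k$ contributes a nonzero contour integral if and only if \emph{at least one} index, possibly an interior one, equals $\pm n$; it is not true that only paths beginning and ending near $n$ survive, nor that the others contribute ``lower-order terms'' --- they contribute exactly zero. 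The hard cases are exactly those in which the pinned index is interior and both endpoints $k,m$ are free and must be summed over.

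Second, stage two does not close as proposed. Bounding the inner $(VR^0)^{k-1}$ chain by $q^{k-1}$ in $L^2$ operator norm controls individual matrix entries and the $\ell^2\to\ell^2$ norm, but the quantity you need is (comparable to) $\sum_{k,m}|B_{km}(n)|$, and an operator-norm bound on a matrix gives no control over the $\ell^1$ sums of its rows or columns. The paper's substitute is Lemma \ref{lemp1}: the weighted chain sums $L(s,\pm n)$, $R(s,\pm n)$ of (\ref{p41}), (\ref{p43}), in which one end is pinned at $\pm n$ and \emph{every other index is summed}, satisfy $L(s,\pm n)\le\varepsilon_n^{s}$ --- every link of the chain must simultaneously produce a small factor and remain summable, not just the outermost block; proving this occupies all of Section 3. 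Moreover, since $v=v_0+Q'$, the matrix entries of $V$ are $V(m)=m\,w(m)$ with only $w\in\ell^2$, so your kernel weights are not $|w(m_j-m_{j-1})|$ but $|m_j-m_{j-1}|\,|w(m_j-m_{j-1})|$; without distributing these linearly growing numerators into the adjacent resolvent denominators via $\frac{i+k}{(n-i)(n+k)}=\frac{1}{n-i}-\frac{1}{n+k}$, the path sums are not even absolutely convergent. (Relatedly, the bound $\|VR^0(\lambda)\|\le q<1$ is delicate for $H^{-1}$ potentials; the paper works with the symmetrized blocks $K_\lambda VK_\lambda$, $K_\lambda=\sqrt{R^0_\lambda}$, for exactly this reason.)
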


As a matter of fact we will prove a stronger statement.

\begin{Theorem}
\label{thm1} In the above notations, for boundary conditions $bc
=Per^\pm $ or $Dir,$
\begin{equation}
\label{p21} \|P_n - P_n^0 \|_{L^1 \to L^\infty} \to 0 \quad
\text{as} \;\; n \to \infty.
\end{equation}
\end{Theorem}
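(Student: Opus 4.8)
The plan is to work from the Neumann-type series representation \eqref{012} for $R(\lambda)-R^0(\lambda)$, integrate it term by term along $C_n$ as in \eqref{011}, and only afterwards estimate operator norms. First I would fix the orthonormal basis of exponentials $e_k(x)=e^{ikx}/\sqrt{\pi}$ (or $\sin kx$ in the Dirichlet case) and write every operator in its matrix form with respect to this basis. The free resolvent $R^0(\lambda)$ is diagonal with entries $(\lambda-k^2)^{-1}$, and multiplication by $v=v_0+Q'$ acts on the matrix side through the Fourier coefficients $\widehat{V}(j,k)$ of $v$; crucially, because $v\in H^{-1}$ the entry involving $Q'$ carries a factor like $(k-j)\,\widehat{Q}(j-k)$, and the decisive point is that this linear growth in the frequency is compensated after one integrates against $R^0$. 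So the $m$-th term $R^0(VR^0)^m$ has an explicit matrix entry as an $m$-fold sum $\sum_{k_1,\dots,k_{m-1}} \frac{1}{\lambda-j^2}\widehat{V}(j,k_1)\frac{1}{\lambda-k_1^2}\cdots\widehat{V}(k_{m-1},k)\frac{1}{\lambda-k^2}$, and the contour integral $\frac{1}{2\pi i}\int_{C_n}$ of this is, by residues, a sum over which of the poles $j^2,k_1^2,\dots,k^2$ fall inside $C_n$. For $n$ large only $k^2=n^2$ (and its partner $(-n)^2$, identified since $e_{-n}$ sits in the same band for $Per^\pm$) lies inside $\{|z-n^2|<n\}$, so the residue calculus collapses each $m$-fold sum dramatically: most internal indices are forced away from $\pm n$, producing denominators bounded below by order $n$, and the surviving expression is what we then estimate.

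Second, I would set up the two key quantitative inputs. One is the elementary band-separation estimate: for $|z-n^2|=n$ and $k\ne \pm n$ one has $|z-k^2|\gtrsim |k^2-n^2|+n \gtrsim n(|k\mp n|+1)$ depending on the sign of $k$, which makes $\sum_{k\ne\pm n}\frac{1}{|z-k^2|}\lesssim \frac{\log n}{n}$ (with an honest $\log$ that we must carry). The other is the consequence of $v\in H^{-1}$: the sequence $\bigl(|w(m)|^2/m^2\bigr)$ is summable, hence $\bigl(|w(m)|/|m|\bigr)\in\ell^2$ and in particular the $\ell^2$-tails $\sum_{|m|>M}|w(m)|^2/m^2\to 0$; this is the only smallness we are allowed to use, and it must be inserted at the right internal summation. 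The norm we are after, $\|P_n-P_n^0\|_{L^1\to L^\infty}$, equals (up to the fixed normalization of the basis) the supremum over $x,y$ of $\bigl|\sum_{j,k}(P_n-P_n^0)(j,k)\,e_j(x)\overline{e_k(y)}\bigr|$, and since $|e_j|\le 1/\sqrt\pi$ this is controlled by the $\ell^1$-norm of the matrix entries, i.e. $\sum_{j,k}|(P_n-P_n^0)(j,k)|$. So the whole problem reduces to showing this double sum of absolute values of matrix entries tends to $0$ — a much cruder but here sufficient bound than the Hilbert–Schmidt/operator-norm estimates used when $v\in L^2$.

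Third, I would organize the estimate of $\sum_{j,k}\bigl|\tfrac{1}{2\pi i}\int_{C_n}[R^0(VR^0)^m](j,k)\,d\lambda\bigr|$ term by term in $m$. The $m=1$ term $R^0VR^0$ integrates, by residues, to an expression supported essentially on the $n$-band times all $k$, with a single free resolvent factor off the band; combined with the band-separation bound and Cauchy–Schwarz against $(|w|/|m|)\in\ell^2$ this gives a bound like $C\,\varepsilon(n)\,\frac{\log n}{\sqrt n}$ where $\varepsilon(n)\to 0$ comes from the $H^{-1}$-tail. For general $m$ the same residue collapse leaves $m-1$ ``off-band'' resolvent factors, each contributing a factor of size $O(\frac{\log n}{n})$ after summation, against $m$ potential factors each bounded by $\|v\|_{H^{-1}}$ in the appropriate weighted sense; this yields a geometric series $\sum_m \bigl(C\|v\|_{H^{-1}}\tfrac{\log n}{n}\bigr)^{m-1}\cdot(\text{first factor})$, convergent for $n$ large, whose sum is $O\bigl(\varepsilon(n)\tfrac{\log n}{\sqrt n}\bigr)\to 0$. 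Summing over $m$ and taking $n\to\infty$ gives \eqref{p21}, and a fortiori Proposition \ref{prop1}. The main obstacle, and the part that requires genuine care rather than bookkeeping, is the residue bookkeeping for the higher-order terms: one must show that after integrating $R^0(VR^0)^mR^0$ along $C_n$ the ``diagonal'' contributions (where an inner index equals $\pm n$ and the corresponding resolvent pole is inside $C_n$) do not destroy the gain — i.e. that those terms either cancel or reassemble into lower-order terms with their own small prefactor — and that the $\log n$ factors from the band sums do not accumulate to a power of $\log n$ that defeats the $1/\sqrt n$ (respectively the $\varepsilon(n)$) decay. Handling this cleanly is exactly the ``integrate first, estimate later'' point flagged in the introduction, and it is where the bulk of Section 3 will go.
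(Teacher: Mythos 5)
Your overall architecture coincides with the paper's: reduce $\|P_n-P_n^0\|_{L^1\to L^\infty}$ to the $\ell^1$-sum of matrix entries $\sum_{k,m}|B_{km}(n)|$, integrate the perturbation series term by term over $C_n$, use analyticity to annihilate the sub-sums in which no index equals $\pm n$, and feed the $H^{-1}$-smallness in through $\ell^2$-tails of $w$. That much is right. But the proposal has a genuine gap precisely at the point you yourself defer to ``the bulk of Section 3'': you never explain how the linear frequency growth in the potential's matrix entries, $|V(j-j')|=|j-j'|\,|w(j-j')|$, is absorbed across an $s$-fold product. Your quantitative claim -- that each off-band resolvent factor contributes $O(\log n/n)$ after summation ``against $m$ potential factors each bounded by $\|v\|_{H^{-1}}$'' -- is not achievable as stated: the factor $|j_\nu-j_{\nu+1}|$ in the numerator necessarily consumes most of one of the two adjacent small denominators $|n^2-j_\nu^2|\sim n|n\mp j_\nu|$, so you cannot simultaneously keep a full $\log n/n$ per internal index and a frequency-independent bound on each potential factor. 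The paper's actual per-step gain is only $\varepsilon_n\sim \big(\tfrac{\log n}{n}\big)^{1/4}+\tilde\rho_n^{1/2}$ with $\tilde\rho_n=\mathcal{E}_n(r)+2\|r\|/\sqrt n$, and extracting even that requires the key identity
\begin{equation*}
\frac{i+k}{(n-i)(n+k)}=\frac{1}{n-i}-\frac{1}{n+k},
\end{equation*}
applied after the alternating change of indices $j_\nu=(-1)^\nu i_\nu$, which distributes each frequency factor between the two neighbouring denominators. Opening the resulting $2^{s-1}$ products of parentheses produces the sub-sums $\tilde\sigma(\delta_1,\dots,\delta_{s-1})$, which are then controlled by a two-track induction on two auxiliary quantities ($\sigma_1$, with denominators $|n-j_\nu|$, and $\sigma_2$, with denominators $|n+j_\nu|$ and one full $|n^2-j_s^2|$), each requiring a separate splitting of the summation range at $|j-n|\le n/2$. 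Without this device your geometric-series bound for general $m$ does not close.

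A second, smaller gap: you acknowledge the surviving residue contributions where internal indices hit $\pm n$ but only assert that they ``either cancel or reassemble into lower-order terms.'' They do not cancel; when two or more indices equal $\pm n$ the integrand has higher-order poles on or near $C_n$, and the paper handles these by bounding the integral by (length of $C_n$) times the supremum of the integrand, then factoring each term at the $\pm n$ positions into blocks estimated either by the combinatorial sums $L(p,\pm n)$, $R(p,\pm n)$ or by the operator-norm bound $\|K_\lambda VK_\lambda\|\le\rho_n$ for the middle pieces (the decomposition into $A_1,\dots,A_7$). This bookkeeping is routine once the key lemma above is available, but it is not optional, and the paper's use of the symmetrized series $K_\lambda(K_\lambda VK_\lambda)^{s+1}K_\lambda$ rather than your $R^0(VR^0)^m$ is what makes the operator-norm bound for the middle blocks available at all.
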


\begin{proof}
We give a complete proof in the case $bc =Per^\pm.$ If $bc =Dir$ the
proof is the same, and only minor changes are necessary due to the
fact that in this case the orthonormal system of eigenfunctions of
$L^0$ is $\{ \sqrt{2} \sin nx, \; n\in \mathbb{N}\}$ ( while it is
$\{\exp(imx), \; m\in 2 \mathbb{Z}\}$ for $bc = Per^+, $ and
$\{\exp(imx), \; m\in 1+ 2 \mathbb{Z}\}$ for $bc = Per^- $). So,
roughly speaking, the only difference is that when working with $bc
= Per^\pm $ the summation indexes in our formulas below run,
respectively, in $2 \mathbb{Z}$ and $1+ 2 \mathbb{Z},$ while for $bc
=Dir$ the summation indexes have to run in $\mathbb{N}.$ Therefore,
we consider in detail only $bc= Per^\pm, $ and provide some formulas
for the case $bc=Dir.$

Let
\begin{equation}
\label{p210} B_{km}(n):= \langle (P_n - P_n^0) e_m, e_k \rangle.
\end{equation}
We are going to prove that
\begin{equation}
\label{p22} \sum_{k,m} |B_{km}(n)|    \to 0 \quad \text{as} \;\; n
\to \infty.
\end{equation}
Of course, the convergence of the series in (\ref{p22}) means that
the operator with the matrix $B_{km} (n)$ acts from $\ell^\infty $
into $\ell^1.$

The Fourier coefficients of an $L^1$-function form an $\ell^\infty
$-sequence. On the other hand,
\begin{equation}
\label{p220} D= \sup_{x,n} |e_n (x)|  < \infty.
\end{equation}
 Therefore, the operators $P_n -P_n^0 $ act
from $L^1$ into $L^\infty $ (even into $C$) and
\begin{equation}
\label{p23} \|P_n - P_n^0 \|_{L^1 \to L^\infty} \leq D^2 \sum_{k,m}
|B_{km}(n)|.
\end{equation}
Indeed, if  $\|f\|_{L^1} =1$ and  $f = \sum f_m e_m, $ then
$|f_m|\leq D $ and
$$ (P_n - P_n^0 )f = \sum_k \left ( \sum_m B_{km} f_m  \right ) e_k.
$$

Taking into account (\ref{p220}), we get
$$
\|(P_n - P_n^0) f \|_{L^\infty} \leq D\sum_k \left | \sum_m B_{km}
f_m \right | \leq  D^2\sum_k \sum_m |B_{km}|,
$$
which proves (\ref{p23}).

In \cite{DM16}, Section 5, we gave a detailed analysis of the
representation
$$ R_\lambda - R_\lambda^0 = \sum_{s=0}^\infty K_\lambda
(K_\lambda V  K_\lambda )^{s+1} K_\lambda, $$ where $K_\lambda =
\sqrt{R^0_\lambda} $ -- see \cite{DM16}, (5.13-14) and what follows
there. By (\ref{011}),
$$ P_n - P_n^0 = \frac{1}{2\pi i} \int_{C_n} \sum_{s=0}^\infty
K_\lambda   (K_\lambda V  K_\lambda )^{s+1} K_\lambda d\lambda. $$
if the series on the right converges. Thus
\begin{equation}
\label{p24} \langle (P_n - P_n^0) e_m, e_k \rangle =
   \sum_{s=0}^\infty
\frac{1}{2\pi i} \int_{C_n} \langle K_\lambda  (K_\lambda V
K_\lambda )^{s+1} K_\lambda e_m, e_k \rangle d\lambda,
\end{equation}
so we have
\begin{equation}
\label{p24b} \sum_{k,m} |\langle (P_n - P_n^0) e_m, e_k \rangle|
\leq \sum_{s=0}^\infty A(n,s),
\end{equation}
where
\begin{equation}
\label{p24c} A(n,s) =\sum_{k,m} \left | \frac{1}{2\pi i}
\int_{C_n} \langle K_\lambda (K_\lambda V K_\lambda )^{s+1}
K_\lambda e_m, e_k \rangle d\lambda \right |.
\end{equation}

By the matrix representation of the operators $K_\lambda $ and $V$
(see more details in \cite{DM16}, (5.15-22)) it follows that
\begin{equation}
\label{p24a} \langle K_\lambda (K_\lambda V  K_\lambda )K_\lambda
e_m, e_k \rangle = \frac{V(k-m)}{(\lambda -k^2)(\lambda -m^2)},
\quad k,m  \in n+2\mathbb{Z},
\end{equation}
for $bc = Per^\pm, $ and
\begin{equation}
\label{p25a} \langle K_\lambda (K_\lambda V  K_\lambda )K_\lambda
e_m, e_k \rangle =
\frac{|k-m|\tilde{q}(|k-m|)-(k+m)\tilde{q}(k+m)}{\sqrt{2}(\lambda
-k^2)(\lambda -m^2)}, \quad k,m \in \mathbb{N},
\end{equation}
for $bc = Dir. $ Let us remind that $\tilde{q}(m)$ are the sine
Fourier coefficients of the function $Q(x),$ i.e.,
$$ Q(x) = \sum_{m=1}^\infty \tilde{q}(m) \sqrt{2} \sin mx.
$$
The matrix representations of $K_\lambda (K_\lambda V  K_\lambda
)K_\lambda$ in (\ref{p24a}) and (\ref{p24b}) are the ''building
blocks'' for the matrices of the products of the form $K_\lambda
(K_\lambda V  K_\lambda )^s K_\lambda$ that we have to estimate
below. For convenience, we set
\begin{equation}
\label{p26} V(m) = m w(m), \quad w \in \ell^2 (2\mathbb{Z}), \quad
r(m)= max(|w(m)|,|w(-m)|)
\end{equation}
if $bc = Per^\pm, $ and
\begin{equation}
\label{p260} \tilde{q}(0)=0, \quad r(m) = \tilde{q}(|m|), \quad m
\in \mathbb{Z}.
\end{equation}
if $bc = Dir.$ We use the notations (\ref{p26}) in the estimates
related to $bc =Per^\pm $ below, and if one would use in a similar
way (\ref{p260}) in the Dirichlet case, then the corresponding
computations becomes practically identical (the only difference will
be that in the Dirichlet case the summation will run over
$\mathbb{Z}$). So, further we consider only  the case $bc =Per^\pm.
$

Let us calculate the first term on the right--hand side of
(\ref{p24}) (i.e., the term coming for $s=0$). We have
\begin{equation}
\label{p25} \frac{1}{2\pi i} \int_{C_n} \frac{V(k-m)}{(\lambda
-k^2)(\lambda -m^2)}d\lambda = \begin{cases} \frac{V(k \mp n)}{(n^2
-k^2)}   &  m= \pm n, \;\; k \neq \pm n, \\ \frac{V(\pm n -m)}{(n^2
-m^2)}   &  k= \pm n, \;\; m \neq \pm n, \\ 0 &   \text{otherwise}.
\end{cases}
\end{equation}
Thus $$ A(n,0) =\sum_{k,m} \left | \frac{1}{2\pi i} \int_{C_n}
\langle K_\lambda (K_\lambda V  K_\lambda )K_\lambda e_m, e_k
\rangle \right | $$ $$ = \sum_{k \neq \pm n} \frac{|V(k-n)|}{|n^2
-k^2|} + \sum_{k \neq \pm n} \frac{|V(k+n)|}{|n^2 -k^2|} + \sum_{m
\neq \pm n} \frac{|V(-n+m)|}{|n^2 -m^2|} + \sum_{m \neq \pm n}
\frac{|V(n-m)|}{|n^2 -m^2|}. $$

By the Cauchy inequality, we estimate the first sum on the
right--hand side:
\begin{equation}
\label{p26a} \sum_{k \neq \pm n} \frac{|V(k-n)|}{|n^2 -k^2|}
=\sum_{k \neq \pm n} \frac{ |k-n| |w(k-n)|}{|n^2 -k^2|}
\end{equation}
$$
\leq \sum_{k \neq -n} \frac{r(k-n)}{|n+k|}  \leq \sum_{k>0} \cdots +
\sum_{k\leq 0, k \neq  -n} \cdots $$

$$ \leq \left ( \sum_{k>0} \frac{1}{|n+k|^2} \right )^{1/2} \cdot
\|r\|+ \left ( \sum_{k\leq 0, k \neq  -n} \frac{1}{|n+k|^2} \right
)^{1/2}  \left ( \sum_{k\leq 0} (r(n-k))^2 \right )^{1/2} $$ $$ \leq
\frac{\|r\|}{\sqrt{n}} + \mathcal{E}_n (r). $$ Since each of the
other three sums could be estimated in the same way, we get
\begin{equation}
\label{p27} A(n,0) \leq \sum_{k,m}  \left | \frac{1}{2\pi i}
\int_{C_n} \langle K_\lambda (K_\lambda V  K_\lambda )K_\lambda e_m,
e_k \rangle   d \lambda \right |  \leq \frac{4\|r\|}{\sqrt{n}} + 4
\mathcal{E}_n (r).
\end{equation}

Next we estimate $A(n,s), s\geq 1.$ By the matrix representation of
$K_\lambda $ and $V$ -- see (\ref{p24a}) -- we have
\begin{equation}
\label{p27a}
\langle K_\lambda  (K_\lambda V  K_\lambda )^{s+1} K_\lambda e_m, e_k
\rangle
= \frac{\Sigma (\lambda;s,k,m)}{(\lambda - k^2)(\lambda - m^2)}
\end{equation}
where
\begin{equation}
\label{p28}
\Sigma (\lambda;s,k,m)
=\sum_{j_1, \ldots, j_s}
\frac{V(k-j_1) V(j_1 - j_2) \cdots V(j_{s-1} -j_s)V(j_s -m)}
{(\lambda -j_1^2) (\lambda -j_2^2)\cdots
 (\lambda -j_s^2) },
\end{equation}
$ k, m, j_1, \ldots, j_s \in n+ 2\mathbb{Z}. $
For convenience, we set also
\begin{equation}
\label{p28a}
\Sigma (\lambda;0,k,m) = V(k-m).
\end{equation}

In view of (\ref{p24c}), we have
\begin{equation}
\label{p29} A(n,s) = \sum_{k,m} \left |
 \frac{1}{2\pi i} \int_{C_n}
\frac{\Sigma (\lambda;s,k,m)}{(\lambda - k^2)(\lambda - m^2)}
d\lambda \right |.
\end{equation}

Let us consider the following sub--sums of $\Sigma (\lambda;s,k,m):$
\begin{equation}
\label{p30}
\Sigma^0 (\lambda;s,k,m) =\sum_{j_1, \ldots, j_s \neq \pm n} \cdots \quad
\text{for} \;s \geq 1,
\quad \Sigma^0 (\lambda;0,k,m) := V(k-m);
\end{equation}
\begin{equation}
\label{p30a}
\Sigma^1 (\lambda;s,k,m)
= \sum_{\exists \; \text{one} \; j_\nu = \pm n}  \cdots
\quad  \text{for} \;s \geq 1;
\end{equation}
\begin{equation}
\label{p31}
\Sigma^* (\lambda;s,k,m) = \sum_{\exists j_\nu = \pm n}  \cdots, \quad
\Sigma^{**} (\lambda;s,k,m)
= \sum_{\exists j_\nu, j_\mu = \pm n}  \cdots, \quad s \geq 2
\end{equation}
(i.e.,  $\Sigma^0 $ is the sub--sum of $\Sigma $ over those indices
$j_1, \ldots, j_s$ that are different from $\pm n ,$  in $\Sigma^1 $
exactly one summation index is equal to $\pm n,$ in $\Sigma^* $ at
least one summation index is equal to $\pm n,$ and in $\Sigma^{**} $
at least two summation indices are equal to $\pm n).$  Notice that
$$ \Sigma (\lambda;s,k,m)=\Sigma^0 (\lambda;s,k,m) + \Sigma^*
(\lambda;s,k,m), \quad s\geq 1, $$ and
$$  \Sigma (\lambda;s,k,m) = \Sigma^0 (\lambda;s,k,m) +\Sigma^1
(\lambda;s,k,m) + \Sigma^{**} (\lambda;s,k,m), \quad s\geq 2. $$

In these notations we have
\begin{equation}
\label{p32}
\sum_{m,k \neq \pm n} \left |
 \frac{1}{2\pi i} \int_{C_n}
\frac{\Sigma^0 (\lambda;s,k,m)}{(\lambda - k^2)(\lambda - m^2)}
d\lambda \right | = 0
\end{equation}
because, for $m,k \neq \pm n,$ the integrand is an analytic function
of $\lambda $ in the disc $\{\lambda: \; |\lambda - n^2| \leq n/4
\}.$

Therefore, $A(n,s)$ could be estimated as follows:
\begin{equation}
\label{p33a} A(n,1) \leq  \sum_{i=1}^5 A_i (n,1),
\end{equation}
and
\begin{equation}
\label{p33} A(n,s) \leq  \sum_{i=1}^7 A_i (n,s), \quad s \geq 2,
\end{equation}
where
\begin{equation}
\label{p331} A_1 (n,s) = \sum_{k,m = \pm n} n \cdot \sup_{\lambda
\in C_n } \left | \frac{\Sigma (\lambda;s,k,m)}{(\lambda -
k^2)(\lambda - m^2)} \right |,
\end{equation}
\begin{equation} \label{p332}
A_2
(n,s) = \sum_{k= \pm n, m \neq \pm n} n \cdot \sup_{\lambda \in C_n
} \left | \frac{\Sigma^0 (\lambda;s,k,m)}{(\lambda - k^2)(\lambda -
m^2)} \right |,
\end{equation}
\begin{equation} \label{p333}
A_3 (n,s) = \sum_{k= \pm n, m \neq \pm n} n \cdot \sup_{\lambda \in
C_n } \left | \frac{\Sigma^* (\lambda;s,k,m)}{(\lambda -
k^2)(\lambda - m^2)} \right |,
\end{equation}
\begin{equation} \label{p334}
A_4 (n,s) = \sum_{k \neq \pm n, m = \pm n} n \cdot \sup_{\lambda \in
C_n } \left | \frac{\Sigma^0 (\lambda;s,k,m)}{(\lambda -
k^2)(\lambda - m^2)} \right |,
\end{equation}
\begin{equation} \label{p335}
A_5 (n,s) = \sum_{k \neq \pm n, m = \pm n} n \cdot \sup_{\lambda \in
C_n } \left | \frac{\Sigma^* (\lambda;s,k,m)}{(\lambda -
k^2)(\lambda - m^2)} \right |,
\end{equation}
\begin{equation} \label{p336}
A_6 (n,s) = \sum_{k,m \neq \pm n} n \cdot \sup_{\lambda \in C_n }
\left | \frac{\Sigma^1 (\lambda;s,k,m)}{(\lambda - k^2)(\lambda -
m^2)}\right |,
\end{equation}
\begin{equation} \label{p337}
 A_7 (n,s) = \sum_{k,m \neq \pm n} n \cdot
\sup_{\lambda \in C_n } \left | \frac{\Sigma^{**}
(\lambda;s,k,m)}{(\lambda - k^2)(\lambda - m^2)} \right |.
\end{equation}
First we estimate $A_1 (n,s). $ By (\ref{p24a}) and \cite{DM16},
Lemma 19 (inequalities (5.30),(5.31)),
\begin{equation}
\label{p34} \sup_{\lambda \in C_n} \|K_\lambda \|
=\frac{2}{\sqrt{n}}, \quad \sup_{\lambda \in C_n} \| K_\lambda V
K_\lambda \| \leq \rho_n := C \left (\frac{\|r\|}{\sqrt{n}} +
\mathcal{E}_{\sqrt{n}} (r)\right  ),
\end{equation}
where $r = (r(m))$ is defined by the relations (\ref{p26}) and $C$
is an absolute constant.

\begin{Lemma}
\label{lemp0} In the above notations
\begin{equation}
\label{p36} \sup_{\lambda \in C_n } \left | \frac{\Sigma
(\lambda;s,k,m)}{(\lambda - k^2)(\lambda - m^2)} \right | \leq
\frac{1}{n} \rho_n^{s+1}.
\end{equation}
\end{Lemma}

\begin{proof}
Indeed, in view of (\ref{p28}) and  (\ref{p34}), we have
$$ \left |
\frac{\Sigma (\lambda;s,k,m)}{(\lambda - k^2)(\lambda - m^2)} \right |
=
|\ K_\lambda  (K_\lambda V  K_\lambda )^{s+1} K_\lambda e_k,
e_m \rangle | $$
$$
\leq \| K_\lambda  (K_\lambda V  K_\lambda )^{s+1} K_\lambda \|
\leq  \| K_\lambda \| \cdot
 \| K_\lambda V  K_\lambda \|^{s+1} \cdot  \| K_\lambda \| \leq
\frac{1}{n} \rho_n^{s+1},
$$
which proves (\ref{p36}).
\end{proof}

Now we estimate $A_1 (n,s).$ By (\ref{p36}),
\begin{equation}
\label{p37} A_1 (n,s) = \sum_{m,k = \pm n} n \cdot \sup_{\lambda
\in C_n } \left | \frac{\Sigma (\lambda;s,k,m)}{(\lambda -
k^2)(\lambda - m^2)} \right | \leq  4 \rho_n^{s+1}.
\end{equation}

To estimate $A_2 (n,s),$ we consider $\Sigma^0 (\lambda; s,k,m)$
for $k = \pm n.$  From the elementary inequality
\begin{equation}
\label{p38} \frac{1}{|\lambda - j^2|} \leq \frac{2}{|n^2 - j^2|}
\quad \text{for} \quad  \lambda \in C_n, \; j \in n + 2
\mathbb{Z}, \; j \neq \pm n,
\end{equation}
it follows, for  $m \neq \pm n,$
\begin{equation}
\label{p39}
\sup_{\lambda \in C_n } \left |
\frac{\Sigma^0 (\lambda;s,\pm n,m)}{(\lambda - n^2)(\lambda - m^2)} \right |
\leq
\frac{1}{n} \cdot 2^{s+1} \times
\end{equation}
$$
\times
  \sum_{j_1,\ldots, j_s \neq \pm n} \frac{|V(\pm n -j_1 )V(j_1
-j_2) \cdots V(j_{s-1}- j_s)V(j_s -m)|}
{|n^2 - j_1^2 | |n^2- j_2^2 |\cdots
|n^2- j_s^2||n^2 -m^2 |}.
$$
Thus, taking the sum of both sides of (\ref{p39}) over $m \neq \pm n, $
we get

\begin{equation}
\label{p40} A_2 (n,s) \leq 2^{s+1} \left [ L(s+1,n) + L(s+1,-n)
\right ],
\end{equation}
where
\begin{equation}
\label{p41}
L(p,d) :=
  \sum_{i_1,\ldots, i_p \neq \pm n} \frac{|V(d-i_1 )|}{|n^2 - i_1^2 |}
\cdot \frac{|V(i_1-i_2 )|}{|n^2 - i_2^2 |} \cdots
\frac{|V(i_{p-1}-i_p )|}{|n^2 - i_p^2 |}.
\end{equation}

The roles of $k$ and $m$ in $A_2 (n,s) $ and $A_4 (n,s) $ are
symmetric, so $A_4 (n,s) $ could be estimated in an analogous way.
Indeed, for $k \neq \pm n,$ we have
\begin{equation}
\label{p39a}
\sup_{\lambda \in C_n } \left |
\frac{\Sigma^0 (\lambda;s,k,\pm n)}{(\lambda - k^2)(\lambda - n^2)} \right |
\leq
\frac{1}{n} \cdot 2^{s+1} \times
\end{equation}
$$
\times
  \sum_{j_1,\ldots, j_s \neq \pm n} \frac{|V(k -j_1 )V(j_1
-j_2) \cdots V(j_{s-1}- j_s)V(j_s -\pm n)|}
{|n^2 - k^2 ||n^2 - j_1^2 | |n^2- j_2^2 |\cdots
|n^2- j_s^2|}.
$$
Thus, taking the sum of both sides of (\ref{p39a}) over $k \neq \pm n, $
we get
\begin{equation}
\label{p42} A_4 (n,s) \leq 2^{s+1} \left [ R(s+1,n) + R(s+1,-n)
\right ],
\end{equation}
where
\begin{equation}
\label{p43}
R(p,d) :=
  \sum_{i_1,\ldots, i_p \neq \pm n}
 \frac{|V(i_1-i_2 )|}{|n^2 - i_1^2 |} \cdots
\frac{|V(i_{p-1}-i_p )|}{|n^2 - i_{p-1}^2 |} \cdot
\frac{|V(i_p - d )|}{|n^2 - i_p^2 |}.
\end{equation}

Below (see Lemma \ref{lemp1} and its proof in Sect. 3)  we estimate
the sums $ L(p,\pm n) $ and $ R(p,\pm n). $ But now we are going to
show that $A_i (n,s), \; i = 3, 5, 6, 7, $ could be estimated in
terms of $L$ and $R$ from (\ref{p41}), (\ref{p43}) as well.

To estimate $ A_6 (n,s)$ we write the expression $\frac{\Sigma^1
(\lambda;s,k,m)} {(\lambda - k^2)(\lambda - m^2)} $ in the form $$
\sum_{\nu =1}^s \sum_{d =\pm n} \frac{1}{\lambda- k^2}\Sigma^0
(\lambda; \nu -1, k, d) \frac{1}{\lambda- n^2}\Sigma^0 (\lambda;s -
\nu, d, m) \frac{1}{\lambda- m^2} $$ By (\ref{p38}), the absolute
values of the terms of this double sum do not exceed:

(a) for $\nu = 1 $
$$
2^{s+1} \cdot \frac{|V(k- \pm n)|}{|n^2 - k^2|} \cdot \frac{1}{n} \cdot
\sum_{i_1, \ldots,  i_{s-1}\neq \pm n}
\frac{|V(\pm n - i_1)|  |V(i_1 -i_2) | \cdots  |V(i_{s-1} -m) |}
{|n^2 -i_1^2| \cdots  |n^2 -i_{s-1}^2|  |n^2 -m^2|}.
$$

(b) for $\nu = s $
$$
2^{s+1} \cdot   \left (
\sum_{i_1, \ldots,  i_{s-1}\neq \pm n}
\frac{|V(k - i_1)|
|V(i_1 - i_2)| \cdots  |V(i_{s-1} -\pm n)|}
{|n^2 -k^2||n^2 -i_1^2| |n^2 -i_2^2|
\cdots  |n^2 -i_{s-1}^2| } \right )
\cdot \frac{1}{n} \cdot
\frac{|V(\pm n -m)|}{|n^2 - m^2|}
$$

(c) for $ 1 < \nu < s $ $$ 2^{s+1} \cdot   \left ( \sum_{i_1,
\ldots,  i_{\nu-1}\neq \pm n} \frac{|V(k - i_1)| |V(i_1 - i_2)|
\cdots  |V(i_{\nu -1} -\pm n)|} {|n^2 -k^2| |n^2 -i_1^2| |n^2
-i_2^2| \cdots  |n^2 -i_{\nu -1}^2| } \right ) \cdot \frac{1}{n}
$$ $$    \times \; \sum_{i_1, \ldots,  i_{s-\nu} \neq \pm n}
\frac{|V(\pm n - i_1)|  |V(i_1 -i_2) | \cdots  |V(i_{s-\nu} -m) |}
{|n^2 -i_1^2| \cdots  |n^2 -i_{s-\nu}^2|  |n^2 -m^2|}. $$

Therefore, taking the sum over $m,k \neq \pm n, $
we get
\begin{equation}
\label{p44} A_6 (n,s) \leq 2^{s+1} \cdot \sum_{\nu =1}^s \sum_{d =
\pm n} R(\nu,d) \cdot L(s+1-\nu ,d).
\end{equation}

One could estimate  $A_3(n,s), A_5 (n,s) $ and $A_7 (n,s) $ in an
analogous way. We will write the core formulas but omit some
details.

To estimate $A_3 (n,s), $ we use the identity $$ \frac{\Sigma
(\lambda;s,k, \pm n )}{(\lambda -k^2)(\lambda - n^2)} =\sum_{\nu
=1}^s \sum_{d=\pm n} \frac{1}{\lambda - k^2} \Sigma^0 (\lambda;\nu
-1, k, d) \frac{1}{\lambda - n^2} \Sigma (\lambda;s- \nu, d, \pm
n) \frac{1}{\lambda - n^2}. $$ In view of (\ref{p36}), (\ref{p38})
and (\ref{p43}), from here it follows that
\begin{equation}
\label{p45} A_3 (n,s) \leq 2^{s+1} \cdot \sum_{\nu =1}^s \sum_{d =
\pm n}
 R(\nu ,d) \cdot \rho_n^{s-\nu+1}.
\end{equation}

We estimate $A_5 (n,s) $ by using the identity $$ \frac{\Sigma
(\lambda;s,\pm n, m)}{(\lambda -n^2)(\lambda - m^2)} =\sum_{\nu
=1}^s \sum_{d=\pm n} \frac{1}{\lambda - n^2} \Sigma (\lambda;\nu -1,
\pm n, d) \frac{1}{\lambda - n^2} \Sigma^0 (\lambda;s- \nu, d, m)
\frac{1}{\lambda - m^2}. $$ In view of (\ref{p36}), (\ref{p38}) and
(\ref{p41}), from here it follows that
\begin{equation}
\label{p46} A_5 (n,s) \leq 2^{s+1} \cdot \sum_{\nu =1}^s \sum_{d =
\pm n}
 \rho_n^{\nu} \cdot L(s-\nu+1,d).
\end{equation}

Finally, to estimate $A_7 (n,s) $ we use the identity
$$\frac{\Sigma (\lambda;s,k, m)}{(\lambda -k^2)(\lambda - m^2)} =
\sum_{1 \leq \nu < \mu \leq s }^s \sum_{d_1, d_2=\pm n}
\frac{1}{\lambda - k^2} \Sigma^0 (\lambda;\nu -1, k, d_1)  \times
$$$$ \times \; \frac{1}{\lambda - n^2}  \Sigma (\lambda; \mu- \nu
-1, d_1, d_2) \frac{1}{\lambda - n^2} \Sigma^0 (\lambda;s -\mu,
d_2, m) \frac{1}{\lambda - m^2} $$ In view of (\ref{p36}),
(\ref{p38}), (\ref{p41}) and (\ref{p43}), from here it follows
that
\begin{equation}
\label{p47} A_7 (n,s) \leq 2^{s} \cdot \sum_{1 \leq \nu < \mu \leq s
} \sum_{d_1, d_2=\pm n}  R(\nu,d_1) \cdot \rho_n^{\mu-\nu} \cdot
L(s-\mu+1,d_2).
\end{equation}

Next we estimate $L(p,\pm n) $ and $R(p,\pm n).$ Changing the
indices in (\ref{p43}) by $$ j_\nu = -i_{p+1-\nu}, \quad  1\leq
\nu \leq p, $$ we get
\begin{equation}
\label{p51} R(p,d) = L(p, -d).
\end{equation}
Therefore, it is enough to estimate only $L(p, \pm n).$

\begin{Lemma}
\label{lemp1} In the above notations, there exists a sequence of
positive numbers $\varepsilon_n \to 0 $ such that, for large enough
$n,$
\begin{equation}
\label{p52}  L(s, \pm n) \leq (\varepsilon_n)^s, \quad \forall
\,s\in \mathbb{N}.
\end{equation}

\end{Lemma}

The proof of this lemma is technical. It is given in detail in
Section 3. Then in Section 4 we complete the proof of Theorem
\ref{thm1}. With (\ref{p51}) and (\ref{p52}), in Section 4 we will
use Lemma \ref{lemp1} in the following form.

\begin{Corollary}
\label{cor1}
 In the above notations, there exists a sequence of
positive numbers $\varepsilon_n \to 0 $ such that, for large enough
$n,$
\begin{equation}
\label{p520} \max\{ L(s, \pm n), R(s, \pm n) \} \leq
(\varepsilon_n)^s, \quad \forall \,s\in \mathbb{N}.
\end{equation}
\end{Corollary}

\section{Proofs and technical inequalities}

We follow the notations from Section 2. Now we prove Lemma
\ref{lemp1}.

\begin{proof}
First we show that
\begin{equation}
\label{p53}  L(s, \pm n) \leq \sigma (n,s),\quad  s \geq 1,
\end{equation}
where
\begin{equation}
\label{p11a}  \sigma (n,1) =\sum_{j_1 \neq \pm n}
\frac{r(n+j_1)}{|n^2 - j_1^2|},
\end{equation}
for $s\geq 2 $
\begin{equation}
\label{p11} \sigma (n,s) := \sum_{j_1,\ldots, j_s \neq \pm n}
\left (\frac{1}{|n-j_1|} +\frac{1}{|n+j_2|} \right )  \cdots \left
(\frac{1}{|n-j_{s-1}|} +\frac{1}{|n+j_s|} \right )
\frac{1}{|n-j_s|}
\end{equation}
$$ \times \;\;  r(n+j_1)r(j_1 +j_2) \cdots r(j_{s-1}+j_s), $$ and
the sequence $r= (r(m)) $ is defined by (\ref{p26}).

For $s=1$ we have, with $i_1 = -j_1,$
$$
L(1,n) = \sum_{j_1 \neq \pm n} \frac{|V(n-j_1)|}{|n^2 - j_1^2|}
=\sum_{i_1 \neq \pm n} \frac{|V(n+i_1)|}{|n^2 - i_1^2|}
=\sum_{i_1 \neq \pm n} \frac{|w(n+i_1)|}{|n - i_1|}
\leq \sum_{i_1 \neq \pm n} \frac{r(n+i_1)}{|n - i_1|}
$$
(where (\ref{p26}) is used).
In an analogous way we get
$$
L(1,-n) = \sum_{j_1 \neq \pm n} \frac{|V(-n-j_1)|}{|n^2 - j_1^2|}
=\sum_{i_1 \neq \pm n} \frac{|w(-n-i_1)|}{|n - i_1|}
\leq \sum_{i_1 \neq \pm n} \frac{r(n+i_1)}{|n - i_1|},
$$
so, (\ref{p53}) holds for $s=1.$

Let $s \geq 2.$ Changing the indices of summation in (\ref{p41})
(considered with $p=s $ and $d=n)$ by $ j_\nu = (-1)^\nu i_\nu, $
we get $$  L(s,n) = \sum_{j_1,\ldots, j_s \neq \pm n}
\frac{|V(n+j_1)|}{|n^2 -j_1^2 |} \frac{|V(-j_1 -j_2)|}{|n^2 -j_1^2
|} \cdots \frac{|V[ (-1)^{s-1}(j_{s-1}+j_s)]|}{|j_s^2 -n^2|} $$ $$
=\sum_{j_1,\ldots, j_s \neq \pm n} \frac{|n+ j_1||j_2 + j_1|
\cdots |j_s +j_{s-1}|}{|j_1^2 - n^2| |j_2^2 - n^2|\cdots |j_s^2
-n^2|} |w(n+ j_1)w(-j_1 -j_2) \cdots w[ (-1)^{s-1}(j_{s-1}+j_s)]|
$$ $$ \leq
 \sum_{j_1,\ldots, j_s \neq \pm n} \frac{|j_2
+j_1| \cdots |j_s +j_{s-1}|}{|n -j_1| |n^2 -j_2^2 |\cdots |n^2 -
j_s^2 |} r(n+j_1)r(j_1 +j_2) \cdots r(j_{s-1}+j_s). $$

By the identity $$ \frac{i+k}{(n-i)(n+k)} = \frac{1}{n-i}
-\frac{1}{n+k}, $$ we get that the latter sum does not exceed

$$ \sum_{j_1,\ldots, j_s \neq \pm n} \left |\frac{1}{n-j_1}
-\frac{1}{n+j_2} \right |  \cdots \left |\frac{1}{n-j_{s-1}}
-\frac{1}{n+j_s} \right | \frac{1}{|n-j_s|}
$$
$$
 \times  \;r(n+j_1)r(j_1 +j_2) \cdots r(j_{s-1}+j_s)
 \leq \sigma (n,s). $$

Changing the indices of summation in (\ref{p41}) (considered with
$p=s $ and $d=-n)$ by $ j_\nu = (-1)^{\nu +1} i_\nu, $ one can
show that $L(s,-n) \leq \sigma (n,s). $ Since the proof is the
same we omit the details. This completes the proof of (\ref{p53}).

In view of (\ref{p53}), Lemma \ref{lemp1} will be proved if we
show that there exists a sequence of positive numbers
$\varepsilon_n \to 0$ such that, for large enough $n,$
\begin{equation}
\label{p55} \sigma (n,s) \leq  (\varepsilon_n)^s, \quad \forall
\,s\in \mathbb{N}.
\end{equation}

In order to prove (\ref{p55}) we need the following statements.

\begin{Lemma}
\label{lemp2} Let $r = (r(k)) \in \ell^2 (2\mathbb{Z}), \; r(k)
\geq 0, $ and let
\begin{equation}
\label{p0} \sigma_1 (n,s;m) = \sum_{j_1, \ldots, j_s \neq   n}
\frac{r(m+j_1)}{|n-j_1|}\frac{r(j_1+j_2)}{|n-j_2|} \cdots
\frac{r(j_{s-1}+j_s)}{|n-j_s|}, \quad n,s \in \mathbb{N},
\end{equation}
where $ m, j_1, \ldots, j_s \in n+ 2\mathbb{Z}. $ Then, with
\begin{equation}
\label{p1} \tilde{\rho}_n := \mathcal{E}_n (r) + 2\|r\|/\sqrt{n},
\end{equation}
we have, for $ n \geq 4,$
\begin{equation}
\label{p2} \sigma_1 (n,1;m) \leq \begin{cases}
\tilde{\rho}_n  \quad &
\text{if} \;\; |m-n| \leq n/2,\\ \|r\|
\quad & \text{for arbitrary} \; \;
m \in n+2\mathbb{Z},
\end{cases}
\end{equation}
\begin{equation}
\label{p3} \sigma_1 (n,2p;m) \leq  (2\|r\| \tilde{\rho}_n)^p, \quad
\sigma_1 (n,2p + 1;m) \leq \|r\| \cdot (2\|r\| \tilde{\rho}_n)^p.
\end{equation}

\end{Lemma}

\begin{proof}  Let us recall that
$$\sum_{k=1}^\infty \frac{1}{k^2} = \pi^2/6, \quad
\sum_{k=n+1}^\infty \frac{1}{k^2} <\sum_{k=n+1}^\infty \left
(\frac{1}{k-1} - \frac{1}{k} \right ) = \frac{1}{n}. $$ Therefore,
one can easily see that
 $$
\sum_{i\in 2\mathbb{Z}, i\neq 0} \frac{1}{i^2} = \pi^2/12 < 1,
\quad \sum_{i\in 2\mathbb{Z}, |i|> n/2} \frac{1}{i^2} < 4/n, \quad
n \geq 4.$$

By the Cauchy inequality, $$ \sigma_1 (n,1;m) = \sum_{j_1 \neq
n}\frac{r(m+j_1)}{|n-j_1|} \leq  \left ( \sum_{j_1 \neq \pm n}
|n-j_1 |^{-2}  \right  )^{1/2} \cdot \|r\| \leq \|r\|, $$ which
proves the second case in (\ref{p2}).

If $|m-n| \leq  n/2 $ then we have $ n/2 \leq m \leq 3n/2. $ Let
us write $\sigma_1 (n,1;m)$ in the form $$\sigma_1 (n,1;m) =
\sum_{ 0<|j_1-n|\leq n/2} \frac{r(m+j_1)}{|n-j_1|} +
\sum_{|j_1-n|>n/2} \frac{r(m+j_1)}{|n-j_1|} $$ and apply the
Cauchy inequality to each of the above sums. In the first sum  $
n/2 \leq j \leq 3n/2, $ so $ j+m \geq n, $ and therefore, we get
$$ \sigma_1 (n,1;m) \leq \left ( \sum_{ i\geq n} |r(i)|^2 \right
)^{1/2} \cdot 1 + \|r\| \cdot \left ( \sum_{|n-j_1|>n/2} |j_1 -
n|^{-2}   \right )^{1/2}. $$ Thus $$ \sigma_1 (n,1;m) \leq
\mathcal{E}_n (r) + \frac{2\|r\|}{\sqrt{n}} = \tilde{\rho}_n \quad
\text{if} \quad |n-m| \leq n/2. $$ This completes the proof of
(\ref{p2}).

Next we estimate  $\sigma_1 (n,2;m).$ We have $$ \sigma_1 (n,2;m)
= \sum_{j_1 \neq n} \frac{r(m+j_1)}{|n-j_1 |} \sum_{j_2 \neq n}
\frac{r(j_1+j_2)}{|n-j_2|} $$ $$ = \sum_{0<|j_1- n| \leq  n/2}
\frac{r(m+j_1)}{|n-j_1|} \cdot \sigma_1 (n,1;j_1) +
\sum_{|j_1-n|>n/2} \frac{r(m+j_1)}{|n-j_1|} \cdot \sigma_1
(n,1;j_1) $$  By the Cauchy inequality and (\ref{p2}), we get $$
\sum_{0<|j_1- n| \leq  n/2} \frac{r(m+j_1)}{|n-j_1|} \sigma_1
(n,1;j_1) \leq  \|r\|\cdot \sup_{0<|j_1- n| \leq  n/2} \sigma_1
(n,1;j_1) \leq \|r\| \tilde{\rho}_n, $$ and $$ \sum_{|j_1-n|>n/2}
\frac{r(m+j_1)}{|n-j_1|} \sigma_1 (n,1;j_1) \leq
\sum_{|j_1-n|>n/2} \frac{r(m+j_1)}{|n-j_1|} \cdot \|r\| \leq
\frac{2\|r\|}{\sqrt{n}} \cdot \|r\|. $$ Thus, in view of
(\ref{p1}), we have
\begin{equation}
\label{p6} \sigma_1 (n,2;m) \leq 2\|r\|\cdot \tilde{\rho}_n.
\end{equation}

On the other hand, for every  $s \in \mathbb{N},$ we have  $$
\sigma_1 (n,s+2;m) = \sum_{j_1,\ldots, j_s  \neq n}
\frac{r(m+j_1)}{|n-j_1|}\cdots \frac{r(j_{s-1}+j_s)}{|n-j_s|}
\sum_{j_{s+1},j_{s+2} \neq n} \frac{r(j_s +j_{s+1})}{|n-j_{s+1}|}
\frac{r(j_{s+1}+j_{s+2})}{|n-j_{s+2}|} $$ $$ = \sigma_1 (n,s;m)
\cdot \sup_{j_s} \sigma_1 (n,2;j_s).$$ Thus, by (\ref{p6}),
\begin{equation}
\label{p7} \sigma_1 (n,s+2;m)  \leq \sigma_1 (n,s;m) \cdot 2\|r\|
\tilde{\rho}_n.
\end{equation}
Now it is easy to see, by induction in $p,$
that  (\ref{p2}), (\ref{p6}) and (\ref{p7}) imply (\ref{p3}).
\end{proof}

\begin{Lemma}
\label{lemp3}
Let $r = (r(k)) \in \ell^2 (2\mathbb{Z}) $
be the sequence defined by (\ref{p26}),
and let
\begin{equation}
\label{p15} \sigma_2 (n,s;m) = \sum_{j_1, \ldots, j_s \neq   n}
r(m+j_1)\frac{r(j_1+j_2)}{|n+j_2|} \cdots
\frac{r(j_{s-2}+j_{s-1})}{|n+j_{s-1}|}
\frac{r(j_{s-1}+j_s)}{|n^2-j^2_s|}, \quad n \in \mathbb{N}, \;
s\geq 2.
\end{equation}
where $ m, j_1, \ldots, j_s \in n+ 2\mathbb{Z}. $ Then we have
\begin{equation}
\label{p16} \sigma_2 (n,2;m) \leq \|r\|^2 \cdot \frac{2\log \, 6n}{n}
\end{equation}
and
\begin{equation}
\label{p17} \sigma_2 (n,s;m) \leq \|r\|^2 \cdot \frac{2 \log \,
6n}{n} \cdot \sup_k \sigma_1 (n,s-2;k), \quad s \geq 3.
\end{equation}
\end{Lemma}

\begin{proof}
We have
\begin{equation}
\label{p18} \sigma_2 (n,2,m) = \sum_{j_2 \neq \pm n}
\frac{1}{|n^2-j_2^2 |} \sum_{j_1 \neq \pm n} r(m+j_1)r(j_1+j_2).
\end{equation}
By the Cauchy inequality, the sum over $ j_1 \neq \pm n  $ does not
exceed $\|r\|^2. $  Let us notice that
\begin{equation}
\label{p19} \sum_{j \neq \pm n} \frac{1}{n^2 - j^2} = \frac{2}{n}
\sum_1^{2n} \frac{1}{k} - \frac{1}{2n^2} < \frac{2 \log 6n}{n}.
\end{equation}
Therefore, (\ref{p18}) and (\ref{p19}) imply (\ref{p16}).

If $s\geq 3$ then  the sum $\sigma_2 (n,s;m) $ can be written in
the form

$$ \sigma_2 (n,s;m) = \sum_{j_s \neq \pm n} \frac{1}{|n^2-j^2_s|}
\sum_{j_2, \ldots, j_{s-1}\neq \pm n} \frac{r(j_2+j_3)}{|n+j_2|}
\cdots \frac{r(j_{s-1}+j_s)}{|n+j_{s-1}|} \sum_{j_1 \neq \pm n}
r(m+j_1)r(j_1+j_2).$$

Changing the sign of all indices, one can easily see that the
middle sum (over $j_2, \ldots, j_{s-1} $) equals $ \sigma_1
(n;s-2,j_s).$ Thus, we have

$$ \sigma_2 (n,s;m) \leq \sum_{j_s \neq \pm n}
\frac{1}{|n^2-j^2_s|} \sigma_1 (n;s-2,j_s) \cdot \sup_{j_2}
\sum_{j_1 \neq \pm n} r(m+j_1)r(j_1+j_2).$$

By the Cauchy inequality, the sum
over $ j_1 \neq \pm n  $
does not exceed $\|r\|^2. $

Therefore, by (\ref{p19}), we get (\ref{p17}).
\end{proof}

{\em Proof of Lemma \ref{lemp1}.}
We set
\begin{equation}
\label{p60} \varepsilon_n = M \cdot \left [ \left ( \frac{2 \log
6n}{n} \right )^{1/4} + (\tilde{\rho}_n)^{1/2} \right ],
\end{equation}
where $M = 4(1 + \|r\|)$  is chosen so that for large enough $n$
\begin{equation}
\label{p61} \sup_m \sigma_1 (n,2p,m) \leq (\varepsilon_n/2)^{2p},
\quad \sup_m \sigma_1 (n,2p+1,m) \leq \|r\|
(\varepsilon_n/2)^{2p}.
\end{equation}
Then, for large enough $n,$ we have
\begin{equation}
\label{p62} \sup_m \sigma_2 (n,s,m) \leq \frac{1}{M}
(\varepsilon_n/2)^{s+1}.
\end{equation}

Indeed, by the choice of $M,$ we have
\begin{equation}
\label{p62a}
 \|r\|^2 \cdot \frac{2\log \, 6n}{n}
  \leq  \|r\|^2 (\varepsilon_n/M)^4 \leq
\frac{1}{M^2} (\varepsilon_n/2)^4.
\end{equation}
Since $\varepsilon_n \to 0, $ there is $n_0 $ such that $
\varepsilon_n <1 \quad \text{for} \; n \geq n_0.$ Therefore, if
$n\geq n_0, $ then
 (\ref{p16}) and (\ref{p62a})
yields
 (\ref{p62}) for $s=2.$
If $s=2p $ with $p>1,$ then (\ref{p17}), (\ref{p62a}) and
(\ref{p61}) imply, for $n \geq n_0,$ $$\sup_m \sigma_2 (n,2p,m) \leq
\frac{1}{M^2} (\varepsilon_n/2)^4 \cdot (\varepsilon_n/2)^{2p-2}
\leq \frac{1}{M}(\varepsilon_n/2)^{2p+1}.$$

In an analogous way, for $n \geq n_0,$  we get $$\sup_m \sigma_2
(n,2p+1,m) \leq \frac{1}{M^2}(\varepsilon_n/2)^4 \cdot
\|r\|(\varepsilon_n/2)^{2(p-1)} \leq
\frac{1}{M}(\varepsilon_n/2)^{2p+2}, $$ which completes the proof of
(\ref{p62}).

Next we estimate $\sigma (n,s)$ by induction in $s.$ By
(\ref{p2}), we have for $n\geq n_0,$
\begin{equation}
\label{p63}
 \sigma (n,1) = \sum_{j_1 \neq \pm n} \frac{r(n-j_1)}{|n- j_1|} =
\sigma_1 (n,1;n) \leq \tilde{\rho}_n \leq (\varepsilon_n/2)^2 \leq
\varepsilon_n.
\end{equation}

For $s=2$ we get, in view of (\ref{p61}) and (\ref{p61}):

\begin{equation}
\label{p64}  \sigma (n,2) = \sum_{j_1, j_2 \neq \pm n}
\left (
\frac{1}{|n- j_1|} + \frac{1}{|n+ j_2|} \right ) \frac{1}{|n-
j_2|} r(n+j_1)r(j_1 + j_2)
\end{equation}
$$ \leq \sum_{j_1, j_2 \neq \pm n}  \frac{1}{|n- j_1|} \cdot
\frac{1}{|n- j_2|} r(n+j_1)r(j_1 + j_2) + \sum_{j_1, j_2 \neq \pm
n}  \frac{1}{|n+ j_2|} \cdot \frac{1}{|n- j_2|} r(n+j_1)r(j_1 +
j_2) $$ $$ = \sigma_1 (n,2,n) + \sigma_2 (n,2,n) \leq
(\varepsilon_n/2)^2 +(\varepsilon_n/2)^2 \leq (\varepsilon_n)^2.
$$

 Next we estimate $ \sigma (n,s), \;s\geq 2, $
Recall that $\sigma (n,s) $ is the sum of terms of the form $$ \Pi
(j_1, \ldots, j_s) r(n+j_1) r(j_1 +j_2 ) \cdots r(j_{s-1}+j_s), $$
where
\begin{equation}
\label{p65}
\Pi (j_1, \ldots, j_s) =
\left ( \frac{1}{|n- j_1|} + \frac{1}{|n+ j_2|} \right )
\cdots
\left ( \frac{1}{|n- j_{s-1}|} + \frac{1}{|n+ j_s|} \right )
\frac{1}{|n- j_s|}.
\end{equation}
By opening the parentheses we get
\begin{equation}
\label{p66} \Pi (j_1, \ldots, j_s) = \sum_{\delta_1,
\ldots,\delta_{s-1}=\pm 1} \left ( \prod_{\nu=1}^{s-1}
\frac{1}{|n+\delta_\nu j_{\nu +\tilde{\delta}_\nu}|} \right )
\frac{1}{|n-j_s|}, \quad \tilde{\delta}_\nu = \frac{1+\delta_\nu
}{2}.
\end{equation}
Therefore,
\begin{equation}
\label{p67} \sigma (n,s) = \sum_{\delta_1, \ldots,\delta_{s-1}=\pm
1} \tilde{\sigma}(\delta_1, \ldots,\delta_{s-1}),
\end{equation}
where
\begin{equation}
\label{p68} \tilde{\sigma}(\delta_1, \ldots,\delta_{s-1})=
\sum_{j_1, \ldots, j_s \neq \pm n} \left ( \prod_{\nu=1}^{s-1}
\frac{1}{|n+\delta_\nu j_{\nu +\tilde{\delta}_\nu}|} \right )
\frac{1}{|n-j_s|} r(n+j_1)r(j_1 +j_2)\cdots r(j_{s-1}+j_s).
\end{equation}

In view of (\ref{p52}), (\ref{p53}) and (\ref{p67}), Lemma
\ref{lemp1} will be proved if we show that
\begin{equation}
\label{p70} \tilde{\sigma}(\delta_1, \ldots,\delta_{s-1}) \leq
(\varepsilon_n/2)^s, \quad s\geq 2.
\end{equation}
We prove (\ref{p70}) by induction in $s.$

If $s=2$ then $$\tilde{\sigma}(-1)= \sigma_1 (n,2,n) \leq
(\varepsilon_n/2)^2, $$ and $$\tilde{\sigma}(+1)= \sigma_2 (n,2,n)
\leq (\varepsilon_n/2)^2. $$

If $s=3$ then there are four cases: $$\tilde{\sigma}(-1,-1)=
\sigma_1 (n,3,n) \leq (\varepsilon_n/2)^3; \quad
\tilde{\sigma}(+1,+1)= \sigma_2 (n,3,n) \leq (\varepsilon_n/2)^3;
$$ $$\tilde{\sigma}(-1,+1)= \sum_{j_1 \neq \pm n}
\frac{r(n+j_1)}{|n-j_1|} \sum_{j_2, j_3 \neq \pm n}
\frac{r(j_1+j_2)}{|n+j_3|}\frac{r(j_2+j_3)}{|n-j_3|}$$ $$=
\sum_{j_1 \neq \pm n} \frac{r(n+j_1)}{|n-j_1|} \sigma_2
(n,2,j_1)$$ $$ \leq \sigma_1 (n,1,n) \cdot \sup_m \sigma_2 (n,2,m)
\leq \|r\| \frac{1}{K} (\varepsilon_n/2)^3 \leq
(\varepsilon_n/2)^3;$$ $$\tilde{\sigma}(+1,-1)= \sum_{j_1,j_2 \neq
\pm n} \frac{r(n+j_1)r(j_1 +j_2)}{|n^2-j^2_2|} \sum_{j_3 \neq \pm
n} \frac{r(j_2 +j_3)}{|n-j_3|} $$ $$ \leq \sigma_2 (n,2,n) \cdot
\sup_m \sigma_1 (n,1,m) \leq \frac{1}{K}(\varepsilon_n/2)^3 \|r\|
\leq (\varepsilon_n/2)^3.$$

Next we prove that if (\ref{p70}) hold for some $s,$ then it holds
for $s+2.$ Indeed, let us consider the following cases:

(i)    $\delta_s =     \delta_{s+1} = -1;         $ then we have
$$ \tilde{\sigma}(\delta_1, \ldots,\delta_{s-1},-1,-1)
=\sum_{j_1,\ldots,j_s \neq \pm n} \left ( \prod_{\nu=1}^{s-1}
\frac{1}{|n+\delta_\nu j_{\nu +\tilde{\delta}_\nu}|} \right )
\frac{1}{|n-j_s|}$$ $$ \times \;  r(n+j_1)r(j_1 +j_2)\cdots
r(j_{s-1} +j_s) \sum_{j_{s+1}, j_{s+2} \neq \pm n} \frac{r(j_s
+j_{s+1})}{|n-j_{s+1}|}\frac{r(j_{s+1} +j_{s+2})}{|n-j_{s+2}|}$$
$$ =\sum_{j_1,\ldots,j_s \neq \pm n} \left ( \prod_{\nu=1}^{s-1}
\frac{1}{|n+\delta_\nu j_{\nu +\tilde{\delta}_\nu}|} \right )
\frac{1}{|n-j_s|} r(n+j_1)\cdots r(j_{s-1} +j_s) \sigma_1
(n,2,j_s) $$ $$ \leq \tilde{\sigma}(\delta_1, \ldots,\delta_{s-1})
\cdot \sup_m  \sigma_1 (n,2,m) \leq (\varepsilon_n/2)^s \cdot
(\varepsilon_n/2)^2 = (\varepsilon_n/2)^{s+2}. $$

(ii)   $\delta_s =-1,   \;     \delta_{s+1} = + 1;$ then we have
$$ \tilde{\sigma}(\delta_1, \ldots,\delta_{s-1},-1,+1)
=\sum_{j_1,\ldots,j_s \neq \pm n} \left ( \prod_{\nu=1}^{s-1}
\frac{1}{|n+\delta_\nu j_{\nu +\tilde{\delta}_\nu}|} \right )
\frac{1}{|n-j_s|}$$ $$ \times \;  r(n+j_1)r(j_1 +j_2)\cdots
r(j_{s-1} +j_s) \sum_{j_{s+1}, j_{s+2} \neq \pm n} \frac{r(j_s
+j_{s+1})r(j_{s+1} +j_{s+2})}{|n^2-j^2_{s+2}|}$$
 $$ \leq
\tilde{\sigma}(\delta_1, \ldots,\delta_{s-1}) \cdot \sup_m
\sigma_2 (n,2,m) \leq (\varepsilon_n/2)^s \cdot
(\varepsilon_n/2)^2 = (\varepsilon_n/2)^{s+2}. $$

(iii) $\delta_s = \delta_{s+1} = +1;$ then, if $\delta_1 = \cdots
= \delta_{s-1}= +1, $ we have $$ \tilde{\sigma}(\delta_1,
\ldots,\delta_{s+1})= \sigma_2 (n,s+2,n) \leq
(\varepsilon_n/2)^{s+2}. $$

Otherwise, let $\mu < s$  be the largest index such that
$\delta_\mu = -1.$ Then we have

$$ \tilde{\sigma}(\delta_1, \ldots,\delta_{s-1},+1,+1)
=\sum_{j_1,\ldots,j_\mu \neq \pm n} \left ( \prod_{\nu=1}^{\mu-1}
\frac{1}{|n+\delta_\nu j_{\nu +\tilde{\delta}_\nu}|} \right )
\frac{1}{|n-j_\mu|}$$ $$ \times \;  r(n+j_1)r(j_1 +j_2)\cdots
r(j_{\mu-1} +j_\mu) \sigma_2 (n,s+2-\mu,j_\mu)             $$ $$
\leq \tilde{\sigma}(\delta_1, \ldots,\delta_{\mu-1}) \cdot \sup_m
\sigma_2 (n,s+2-\mu,j_\mu) \leq (\varepsilon_n/2)^{\mu} \cdot
(\varepsilon_n/2)^{s+2-\mu} = (\varepsilon_n/2)^{s+2}. $$

(iv) $\delta_s =+1, \; \delta_{s+1} = -1;$ then, if $\delta_1 =
\cdots = \delta_{s-1}= +1, $ we have $$ \tilde{\sigma}(\delta_1,
\ldots,\delta_{s+1},-1)= \tilde{\sigma}(+1,\ldots, +1,-1, -1)=
$$$$ =\sum_{j_1,\ldots,j_{s+1}\neq \pm n} \left ( \prod_{\nu =1}^s
\frac{1}{|n+ j_{\nu +1}|} \right ) \frac{1}{|n-j_{s+1}|}
r(n+j_1)\cdots r(j_s +j_{s+1}) \sigma_1 (n,1, j_{s+1}) $$ $$ \leq
\sigma_2 (n,s+1,n) \cdot \sup_m \sigma_1 (n,1,m) \leq \frac{1}{K}
(\varepsilon_n/2)^{s+2} \cdot \|r\| \leq (\varepsilon_n/2)^{s+2}.
$$

Otherwise, let $\mu <s$  be the largest index such that
$\delta_\mu = -1,\; 1\leq \mu < n. $ Then we have

$$ \tilde{\sigma}(\delta_1, \ldots,\delta_{s-1},+1,-1)
=\sum_{j_1,\ldots,j_\mu \neq \pm n} \left ( \prod_{\nu=1}^{\mu-1}
\frac{1}{|n+\delta_\nu j_{\nu +\tilde{\delta}_\nu}|} \right )
\frac{1}{|n-j_\mu|}$$ $$ \times \; \sum_{j_{\mu+1}, \ldots,
j_{s+1} \neq \pm n} \frac{r(j_\mu +j_{\mu+1})}{|n+j_{\mu+2}|}
\cdots
 \frac{r(j_{s-1}+j_{s})}{|n+j_{s+1}|}
\frac{r(j_{s}+j_{s+1})}{|n-j_{s+1}|} \sum_{j_{s+2}\neq \pm n}
\frac{r(j_{s+1}+j_{s+2})}{|n-j_{s+2}|}$$
 $$ \leq \tilde{\sigma}(\delta_1,\ldots,\delta_{\mu-1})
 \cdot \sup_m  \sigma_2 (n,s+1- \mu,m) \cdot \sup_k \sigma_1 (n,1,k)$$
 $$
\leq (\varepsilon_n/2)^{\mu} \cdot \frac{1}{K}
(\varepsilon_n/2)^{s+2-\mu}  \|r\| \leq (\varepsilon_n/2)^{s+2}.
$$ Hence (\ref{p70}) holds for $s\geq 2.$

Now (\ref{p53}), (\ref{p67}) and (\ref{p70}) imply (\ref{p52}),
which completes the proof of Lemma \ref{lemp1}.
\end{proof}

Now we are ready to accomplish the proof of Theorem \ref{thm1}.

\section{Proof of the main theorem}

We need -- because we want to use (\ref{p24b}) -- to give estimates
of $A(n,s)$ from (\ref{p24c}), or (\ref{p29}). By (\ref{p33a}) and
(\ref{p33}), we reduce such estimates to analysis of quantities $A_j
(n,s), \; j=1, \ldots, 7.$

With $\rho_n \in (\ref{p34})$ and $\varepsilon_n \in (\ref{p60}), $
we set
\begin{equation}
\label{18b1} \kappa_n = \max\{ \rho_n, \varepsilon_n\}.
\end{equation}
Then, by Lemma \ref{lemp1} (and Corollary \ref{cor1}), i.e., by the
inequality (\ref{p520}), we have (in view of
(\ref{p37}),(\ref{p40}),(\ref{p42}) and (\ref{p44})--(\ref{p47}))
the following estimates for $A_j:$
$$A_1 \leq 4 \kappa_n^{s+1}, \qquad A_j \leq 2^{s+1} \cdot 2 \kappa_n^{s+1},
\quad j=2,4;$$
$$A_j \leq 2^{s+1} \sum_{\nu=1}^s \left (2 \kappa_n^\nu \cdot
\kappa_n^{s-\nu +1} \right ) = s 2^{s+2} \kappa_n^{s+1},\quad
j=3,5;$$
$$A_6 \leq 2^{s+1} \sum_{\nu=1}^s \left (\kappa_n^\nu \cdot
\kappa_n^{s-\nu +1} \right ) = s 2^{s+1} \kappa_n^{s+1};$$
$$A_7 \leq 2^{s} \sum_{1\leq \nu + \mu \leq s} \left (4 \kappa_n^\nu \cdot
\kappa^{\mu-\nu} \cdot \kappa_n^{s-\mu +1} \right ) = s(s-1) 2^{s+1}
\kappa_n^{s+1}.$$ In view of (\ref{p27}), (\ref{p60}) and
(\ref{p33}), these inequalities imply
$$ A(n,s) \leq  (2+s)^2  (2\kappa_n)^{s+1}.$$
Therefore, the right--hand side of (\ref{p24b}) does not exceed
$$ \sum_{s=0}^\infty A(n,s) \leq  (4\kappa_n)\sum_{s=0}^\infty (s+1)(s+2)
(2\kappa_n)^s =\frac{8\kappa_n}{(1-2\kappa_n)^3}.
$$
Therefore, if $\kappa_n < 1/4$ (which holds for $ n \geq N^* $ with
a proper choice of $N^*$), then $\sum_{s=0}^\infty A(n,s) \leq
64\kappa_n. $ Thus, by (\ref{p24b}) and the notations (\ref{p210}),
\begin{equation}
\label{18d} \| P_n -P_n^0 \|_{L^1 \to L^\infty} \leq \sum_{k,m}
|B_{km} (n)| \leq 64 \kappa_n, \quad n \geq N^*,
\end{equation}
 where $ \kappa_n
\in (\ref{18b1}).$

This completes the proof of Theorem \ref{thm1}. Of course,
Proposition \ref{prop1} follows because $ \|T\|_{L^2 \to L^2} \leq
\|T\|_{L^1 \to L^\infty} $ for any well defined operator $T.$
\end{proof}

\section{Miscellaneous}

1. Theorem \ref{thm1} (or Proposition \ref{prop1}) is an essential
step in the proof of our general statement (see an announcement in
\cite{DM17}, Thm. 9, or \cite{DM16}, Thm. 23), about the
relationship between the rate of decay of spectral gap sequences
(and deviations) and the smoothness of the potentials $v$ under the
{\em a priori } assumption that $v$ is a singular potential, i.e.,
that $v \in H^{-1}_{Per}. $ To use the information about the
deviations $\delta_n = |\mu_n - \frac{1}{2} (\lambda^+_n +
\lambda^-_n|,$ this is done in the framework of the scheme suggested
by the authors in \cite{DM5}. The concluding steps will be presented
in an upcoming paper, the third after \cite{DM17} and the present
one. However, Theorem \ref{thm1} is important outside this context
as well. We will mention now the most obvious
corollaries.\vspace{2mm}

2. The following theorem holds.

\begin{Theorem}
\label{thm7} In the above notations, the $L^p$-norms, $1\leq p \leq
\infty, $ on Riesz subspaces $E^N = Ran \,S_N, $ and $E_n = Ran
\,P_n, \; n \geq N,$  are uniformly equivalent; more precisely,
\begin{equation}
\label{a1} \|f\|_1  \leq \|f\||_\infty \leq C(N) \|f\|_1,  \quad
\forall f \in E^N,
\end{equation}
and
\begin{equation}
\label{a2} \|f\||_\infty \leq 3 \|f\|_1,  \quad  \forall f \in E_n,
\quad n \geq N^* (v),
\end{equation}
where
\begin{equation}
\label{a3} C(N) \leq 50 N \ln N.
\end{equation}
\end{Theorem}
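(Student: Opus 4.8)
The plan is to reduce the statement to two one-sided bounds, $\|f\|_\infty\le C\|f\|_1$ on $E^N$ and on $E_n$. Throughout, the $L^p(I)$-norms are normalized so that $\|1\|_p=1$ for all $p$ (i.e.\ taken with respect to $dx/\pi$); with this normalization Jensen's inequality gives $\|f\|_p\le\|f\|_q$ whenever $p\le q$, so in particular $\|f\|_1\le\|f\|_\infty$ for every $f$ — the first inequality in (\ref{a1}) — and, once $\|f\|_\infty\le C\|f\|_1$ is known on a subspace, every $L^p$-norm there lies between $\|f\|_1$ and $C\|f\|_1$. Hence it suffices to prove $\|f\|_\infty\le3\|f\|_1$ on $E_n$ for large $n$, and $\|f\|_\infty\le C(N)\|f\|_1$ with $C(N)\le50N\ln N$ on $E^N$.

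I would obtain (\ref{a2}) in two steps. First, on the model subspace $E_n^0$ the bound is elementary: an element is $f=ae^{inx}+be^{-inx}$ in the periodic case (and $f=c\sin nx$ in the Dirichlet case, where $\|f\|_1=\tfrac2\pi\|f\|_\infty$ outright). For $bc=Per^\pm$ we have $\|f\|_\infty=|a|+|b|$, while — since $n$ is an integer — the change of variable $\varphi=2nx$ turns $\|f\|_1$ into the full average $\frac1{2\pi}\int_0^{2\pi}|b+ae^{i\varphi}|\,d\varphi$, which dominates every Fourier coefficient of $b+ae^{i\varphi}$ and hence $\max(|a|,|b|)$; thus $\|f\|_\infty\le2\|f\|_1$ on $E_n^0$. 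Next, for $f\in E_n$ (so $P_nf=f$) write $f=g+h$ with $g=P_n^0f\in E_n^0$ and $h=(P_n-P_n^0)f$. By Theorem \ref{thm1}, precisely by (\ref{18d}), $\|h\|_\infty\le64\kappa_n\|f\|_1$ for $n\ge N^*$, with $\kappa_n$ as in (\ref{18b1}); therefore $\|h\|_1\le\|h\|_\infty\le64\kappa_n\|f\|_1$ and $\|g\|_1\le\|f\|_1+\|h\|_1\le(1+64\kappa_n)\|f\|_1$, whence $\|f\|_\infty\le\|g\|_\infty+\|h\|_\infty\le2\|g\|_1+64\kappa_n\|f\|_1\le(2+192\kappa_n)\|f\|_1$. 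This is $\le3\|f\|_1$ as soon as $\kappa_n\le1/192$, and since $\kappa_n\to0$ this choice of threshold is precisely what $N^*(v)$ denotes.

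For (\ref{a1}) the same scheme works with $P_n^0$ replaced by $S_N^0$, the Riesz projection of the free operator $L^0$ for the same contour $\Gamma_N$ that defines $S_N$; its range is the span of the corresponding low eigenfunctions $e_m$ (respectively $\sin mx$). On that range an element is a trigonometric polynomial whose frequencies fill a set of at most $2N+1$ integers, so — after the substitution $y=2x$ in the periodic case, or an odd extension in the Dirichlet case — it coincides with its convolution against the Dirichlet kernel $D_N$, which gives the Nikolskii-type estimate $\|g\|_\infty\le(2N+1)\|g\|_1$. For $f\in E^N$ write $f=g+h$ with $g=S_N^0f$, $h=(S_N-S_N^0)f$, and set $\theta_N=\|S_N-S_N^0\|_{L^1\to L^\infty}$; then $\|g\|_1\le(1+\theta_N)\|f\|_1$ and $\|f\|_\infty\le(2N+1)\|g\|_1+\theta_N\|f\|_1\le(2N+2)(1+\theta_N)\|f\|_1$, so (\ref{a3}) follows once $\theta_N\le C\ln N$. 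That last bound is obtained as in Sections 3 and 4: expand $S_N-S_N^0=\frac1{2\pi i}\int_{\Gamma_N}(R_\lambda-R_\lambda^0)\,d\lambda$, insert the perturbation series (\ref{012}) in the symmetrized form used in Section 3, integrate term by term, and estimate the resulting matrix sums by Cauchy--Schwarz; the logarithm is produced by sums of the type $\sum_j|n^2-j^2|^{-1}\sim n^{-1}\ln n$, cf.\ (\ref{p19}), accumulated over the roughly $N$ bands inside $\Gamma_N$.

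The routine parts — the $L^p$-comparison, the computation on $E_n^0$, the Dirichlet-kernel bound — are painless, and (\ref{a2}) is essentially a corollary of Theorem \ref{thm1}. The main obstacle is the estimate $\theta_N=\|S_N-S_N^0\|_{L^1\to L^\infty}=O(\ln N)$: Theorem \ref{thm1} only handles one contour $C_n$ at a time, so one must redo the term-by-term integration of (\ref{012}) on the large contour $\Gamma_N$ and control how the numerous pole contributions combine. This is where the $\ln N$ in (\ref{a3}) comes from, and keeping the absolute constant as small as $50$ is a matter of careful bookkeeping in those sums.
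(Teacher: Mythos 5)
Your proof of (\ref{a2}) is essentially the paper's: the same decomposition $f=P_n^0f+(P_n-P_n^0)f$, the bound $\|P_n^0f\|_\infty\le 2\|f\|_1$ coming from the two (or one) Fourier coefficients, and Theorem \ref{thm1} for the remainder. (The paper bounds $\|P_n^0f\|_\infty$ directly by $2\|f\|_1$ rather than going through $\|P_n^0f\|_1$, which avoids your extra factor $(1+64\kappa_n)$, but both give a constant $\le 3$.)

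The proof of (\ref{a1}), however, has a genuine gap, and it is located exactly where you defer to ``redo the term-by-term integration on $\Gamma_N$.'' Your decomposition is multiplicative: you pass from $f$ to $g=S_N^0f$, apply the Nikolskii bound $\|g\|_\infty\le(2N+1)\|g\|_1$, and then need $\|g\|_1\le(1+\theta_N)\|f\|_1$, so your final constant is $(2N+2)(1+\theta_N)$ and you are forced to prove $\theta_N=\|S_N-S_N^0\|_{L^1\to L^\infty}=O(\ln N)$. That bound is not established anywhere, and it is much stronger than what the contour-integral machinery yields: the paper's estimate (integrating $\alpha(\lambda)\le 4\sum_k|\lambda-k^2|^{-1}$ over the boundary of the rectangle $\Pi(N^2)$, using (\ref{a42})--(\ref{a55})) gives only $\theta_N\le C_6\,N\log N$ with $C_6\le 600$; indeed your own heuristic --- a contribution $\sim n^{-1}\ln n$ from each of roughly $N$ bands --- sums to at best $O((\ln N)^2)$, not $O(\ln N)$, and even that is unsubstantiated. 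The repair is to make the second step additive rather than multiplicative: bound $\|S_N^0f\|_\infty\le(2N+1)\|f\|_1$ directly (each of the $2N+1$ Fourier coefficients of $f$ is at most $\|f\|_1$), so that
\begin{equation*}
\|f\|_\infty\le\|S_N^0f\|_\infty+\|(S_N-S_N^0)f\|_\infty\le\bigl((2N+1)+\theta_N\bigr)\|f\|_1 ,
\end{equation*}
and then the weaker, provable bound $\theta_N=O(N\log N)$ suffices for (\ref{a3}). Even so, establishing $\theta_N=O(N\log N)$ is the substantive half of the paper's proof (the Hilbert--Schmidt estimate off the rectangle, the choice $H=N^2$, and the multiplier bounds on the four sides of $\partial\Pi(H)$), and your proposal does not carry it out.
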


\begin{proof}
By (\ref{p21}), if $N$ is large enough,
\begin{equation}
\label{a5} \|P_n - P_n^0\|_{L^1 \to L^\infty} \leq \frac{1}{2},
\quad n\geq N.
\end{equation}
If we are more careful when using (\ref{18b1}),(\ref{18d}),
(\ref{p34}) and (\ref{p60}), we may claim (\ref{a5}) for $N$ such
that
\begin{equation}
\label{a6} 2^{9}(1+\|r\|) \left ( \mathcal{E}_{\sqrt{N}} (r) +
\frac{2}{N^{1/4}} \left ( \|r\|^{1/2} + (\ln 6N)^{1/4} \right )
\right ) \leq \frac{1}{2}.
\end{equation}
If $f \in E_n, \, n \geq N,$ we have
\begin{equation}
\label{a6a} f = P_n f = (P_n - P_n^0 )f + P_n^0 f,
\end{equation}
where, for $bc = Per^\pm$
\begin{equation}
\label{a32} P_n^0 f = f_n e^{inx} + f_{-n} e^{-inx}, \quad f_k =
\frac{1}{\pi} \int_0^\pi f(x)e^{-ikx} dx,
\end{equation}
and, for $bc = Dir, $
\begin{equation}
\label{a33} P_n^0 f = 2g_n \sin nx, \quad g_n =
\frac{1}{\pi}\int_0^\pi f(x) \sin nx dx.
\end{equation}
In either case $\|P_n f \|_\infty  \leq 2 \|f\|_1, $ and therefore,
if $ \|f\|_1 \leq 1 $ we have
\begin{equation}
\label{a34} \|f\|_\infty \leq \|(P_n - P_n^0 )f\|_\infty +\| P_n^0
f\|_\infty  \leq 1/2 + 2 \leq 3.
\end{equation}
Remind that a projection
\begin{equation}
\label{a35} S_N = \frac{1}{2\pi i} \int_{\partial R_N}
(z-L_{bc})^{-1} dz,
\end{equation}
where, as in (5.40), \cite{DM16},
\begin{equation}
\label{a41} R_N = \{ z\in \mathbb{C}: \; -N < Re z < N^2 +N, \; |Im
z | < N,
\end{equation}
is finite--dimensional (see \cite{DM16}, (5.54), (5.56), (5.57) for
$\dim S_N $). Now we follow the inequalities proven in \cite{DM16}
to explain (\ref{a1}) and (\ref{a3}). Lemma 20, inequality (5.41) in
\cite{DM16}, states that
\begin{equation}
\label{a42} \sup \{\|K_\lambda V K_\lambda \|_{HS}: \; \lambda \not
\in R_N, \;  Re \lambda \leq N^2 - N \} \leq C \left ( \frac{(\log
N)^{1/2}}{N^{1/4}} \|q\| + \mathcal{E}_{4\sqrt{N}} (q) \right ).
\end{equation}
But by (\ref{a35})
\begin{equation}
\label{a43} S_N -S_N^0 = \frac{1}{2\pi i} \int_\Gamma K_\lambda
\sum_{m=1}^\infty (K_\lambda V K_\lambda)^m K_\lambda d\lambda,
\end{equation}
where we can choose $\Gamma$ to be the boundary $\partial \Pi $ of
the rectangle
\begin{equation}
\label{a44} \Pi (H)= \{z \in \mathbb{C}: \; -H \leq Re \,z \leq N^2
+ N, \; |Im \,z| \leq H \}, \quad H \geq N.
\end{equation}
Then by (\ref{a42}) and (\ref{a43}) the norm of the sum in the
integrand can be estimated by
\begin{equation}
\label{a45} \left \|\sum_1^\infty  \right \|_{2\to 2} \leq
\sum_1^\infty \|K_\lambda V K_\lambda\|^m_{HS} \leq 1, \quad \forall
\lambda \in
\partial \Pi (H)
\end{equation}
if (compare with (\ref{a6})) $ N \geq N^*(q)  $ and $N^* = N^*(q)$
is chosen to guarantee that
\begin{equation}
\label{a51} \text{``the right side in (\ref{a42})``} \leq 1/2  \; \;
\text{for} \; N \geq N^*.
\end{equation}
The additional factor $K_\lambda $ is a multiplier operator defined
by the sequence $\tilde{K} =\{1/\sqrt{\lambda -k^2}\},$
 so its norms $ \|K_\lambda : L^1 \to L^2\|$ and
 $ \;\|K_\lambda : L^1 \to L^2\|$
are estimated by $2\tilde{\kappa},$
 where
\begin{equation}
\label{a53}  \tilde{\kappa}= \|\tilde{K_\lambda} : \; \ell^\infty
\to \ell^2 \| = \|\tilde{K_\lambda} : \; \ell^2 \to \ell^1 \|=\sum_k
\frac{1}{|\lambda - k^2|}.
\end{equation}
Therefore, by (\ref{a45}) and (\ref{a53}),
\begin{equation}
\label{a54}  \alpha(\lambda):= \|K_\lambda \left (\sum_1^\infty
\cdots \right )K_\lambda : \; L^1 \to L^\infty \| \leq \sum_k
\frac{4}{|\lambda - k^2|}.
\end{equation}
By Lemma 18(a) in \cite{DM16} (or, Lemma 79(a) in \cite{DM15})
\begin{equation}
\label{a55}
 \sum_k \frac{1}{|n^2 - k^2|+b} \leq C_1 \frac{\log b}{\sqrt{b}} \quad
 \text{if} \;\; n \in \mathbb{N}, \; b \geq 2.
\end{equation}
(In what follows $C_j, \, j=1,2, \ldots$ are absolute constants;
$C_1 \leq 12.$) These inequalities are used to estimate the norm
$\alpha(\lambda)$ on the boundary $\partial \Pi (H) = \cup I_k (H),
\; k=1,2,3,4,$ where
$$
I_1 (H)= \{z: \; Re \, z = -H, \; |Im \, z| \leq H\} $$
$$
I_2 (H)= \{z: \;  -H \leq Re \, z \leq N^2 +N  , \; Im \, z = H\}
$$
$$
I_3 (H)= \{z: \; Re \, z = N^2 +N, \; |Im \, z| \leq H\} $$
$$ I_2 (H)= \{z: \;  -H \leq Re \, z \leq N^2 +N  , \; Im \, z = -
H\}
$$
Then we get
$$
\int_{I_1}\alpha(\lambda) |d\lambda| \leq C_2  \frac{\log H}{\sqrt
{H}} \cdot H,  $$
$$
\int_{I_k}\alpha(\lambda) |d\lambda| \leq C_3  \frac{\log H}{\sqrt
{H}}\cdot N^2, \quad k=2,4. $$
$$
\int_{I_3}\alpha(\lambda) |d\lambda| \leq C_4 \int_0^H \frac{\log
(N+y)}{\sqrt{N+y}} dy \leq C_5\sqrt{H} \log H. $$ If we put $H= N^2$
and sum up these inequalities we get by (\ref{a43})
\begin{equation}
\label{a71} \| S_N -S_N^0\|_{L^1 \to L^\infty} \leq C_6 N \log N,
\end{equation}
where $C_6 $ is an absolute constant $\leq 600.$

Now, as in (\ref{a6a}) and (\ref{a32}), let us notice that for $g
\in E^N$
\begin{equation}
\label{a72} g = S_N g = (S_N -S_N^0) g + S_N^0 g,
\end{equation}
where
\begin{equation}
\label{a73} S_N^0 g = \sum_{|k|\leq N} g_k e^{ikx}, \quad k
\;\text{even for} \;bc =Per^+,\;\;\text{odd for} \;bc =Per^-,
\end{equation}
and
\begin{equation}
\label{a74} S_N^0 g = 2\sum_{|k|\leq N} \tilde{g}_k \sin kx, \quad
bc = Dir,
\end{equation}
where
\begin{equation}
\label{a75} g_k = \frac{1}{\pi}\int_0^\pi g(x) e^{ikx}dx, \quad
\tilde{g}_k = \frac{1}{\pi}\int_0^\pi g(x) \sin kx dx.
\end{equation}
In either case
\begin{equation}
\label{a76} \|S_N^0 g\|_\infty \leq 2N \|g\|_1.
\end{equation}
Therefore, by (\ref{a71}) and (\ref{a76}), if $\|f\|_1 \leq 1$ we
have
\begin{equation}
\label{a81} \|f\|_\infty \leq C_6 N \log N + 2N \leq C_7 N \log N,
\quad N \geq N^* \in (\ref{a51}).
\end{equation}
Let us fix $N_0 \geq N^*, N_*,$ where $N_*$ is determined by
(\ref{a6}), i.e., (\ref{a6}) holds if $N \geq N_*.$ Then, by
(\ref{a81}),
\begin{equation}
\label{a82} \|S_{N_0}\|_{L^1 \to L^\infty} \leq C N_0 \log N_0,
\end{equation}
and for $N >N_0 $ we may improve the estimate in (\ref{a81}).
Indeed,
$$ S_N = (S_N - S_{N_0}) + S_{N_0}= S_{N_0} + \sum_{N_0 +1}^N P_k
$$
and, by (\ref{a5}) and (\ref{a32}),  $\|P_k\|_{L^1 \to L^\infty}
\leq 3.$ Therefore, by (\ref{a82}), $$\|S_N \|_{L^1 \to L^\infty}
\leq C N_0 \log N_0 + (N-N_0) \leq 3N + C N_0 \log N_0. $$
\end{proof}
\vspace{2mm}

3. Of course, any estimates of the kind
\begin{equation}
\label{a91} \|S_N - S_{N_0}\|_{L^1 \to L^\infty} \leq C(N)
\end{equation}
with $C_N \to \infty $ as $N \to \infty $ are weaker than the claim
\begin{equation}
\label{a92} \omega_N =\|S_N - S_{N_0}\|_{L^1 \to L^\infty} \to 0
\end{equation}
or even that $\omega_N$ is a bounded sequence. For real--valued
potentials $v \in H^{-1} $  and  $bc = Dir, $ (\ref{a92}) would
follow from Theorem 1 in \cite{Sa} if its proof given in \cite{Sa}
were valid. For complex--valued potentials $v \in H^{-1}, $ when the
system of eigenfunctions is not necessarily orthogonal the statement
of Theorem 1 in \cite{Sa} is false. Maybe it could be corrected if
the ''Fourier coefficients'' are chosen as
$$
c_k (f) = \langle f,w_k \rangle
$$
where the system $\{ w_k\}$ is bi--orthonormal with respect to $\{
u_k\}, $ i.e.,
$$ \langle  u_j, w_k \rangle = \delta_{jk} $$
(not the way as it is done in \cite{Sa}). But more serious
oversight, not just a technical misstep, seems to be a crucial
reference to \cite{SS03}, without specifying lines or statements in
\cite{SS03}, to claim something that cannot be found there. Namely,
the author of \cite{Sa} alleges that in \cite{SS03} the following
statement is proven. {\em Let $\{ y_k (x)\}$ be a normalized system
of eigenfunctions of the operator $$ L= - d^2/dx^2 + v, \quad v \in
H^{-1} ([0,\pi]),
$$
considered with Dirichlet boundary conditions. Then
\begin{equation}
\label{a111} y_k (x) = \sqrt{2} \sin kx + \psi_k (x),
\end{equation}
where
\begin{equation}
\label{a112} \sup_{[0,\pi]} |\psi_k (x) | \in \ell^2.
\end{equation}
(Two more sup--sequences coming from derivatives $y^\prime_k$ are
claimed to be in $\ell^2$ as well.)}

However, what one could find in \cite{SS03}, Theorem 2.7 and Theorem
3.13(iv),(v), is the claim
\begin{equation}
\label{a113} \sup_{[0,\pi]} \sum_k |\psi_k (x) |^2   < \infty.
\end{equation}
Of course, (\ref{a112}) implies (\ref{a113}) but if $\{\psi_k \}$ is
a sequence of $L^\infty$-functions then (\ref{a113}) does not imply
(\ref{a112}).

\end{document}